\newcommand{\bg}{\boldsymbol{g}}
\newcommand{\tfP}{\mathring{P}}
\DeclareMathOperator{\tr}{tr}
\DeclareMathOperator{\Ima}{Im}
\newtheorem{theorem}{Theorem}[section]
\newtheorem{lemma}[theorem]{Lemma}
\newtheorem{claim}[theorem]{Claim}
\newtheorem{proposition}[theorem]{Proposition}
\newtheorem{corollary}[theorem]{Corollary}
\newtheorem{conjecture}[theorem]{Conjecture}
\theoremstyle{definition}
\newtheorem{definition}[theorem]{Definition}
\newtheorem{remark}[theorem]{Remark}
\numberwithin{equation}{section}
\subjclass[2020]{53C21, 53C07}
\title[The Anti-Self-Dual Deformation Complex]{The Anti-Self-Dual Deformation Complex and a conjecture of Singer}
\author{A. Rod Gover}
\address{Department of Mathematics \\
         University of Auckland \\
         Private Bag 92019, Auckland 1142, New Zealand}
\author{Matthew J. Gursky}
\address{Department of Mathematics \\
         University of Notre Dame\\
         Notre Dame, IN 46556}
\begin{document}

\maketitle

\begin{abstract}  Let $(M^4,g)$ be a smooth, closed, oriented anti-self-dual (ASD) four-manifold.  $(M^4,g)$ is said to be {\em unobstructed} if the cokernel of the linearization of the self-dual Weyl tensor is trivial.  This condition can also be characterized as the vanishing of the second cohomology group of the ASD deformation complex, and is central to understanding the local structure of the moduli space of ASD conformal structures.  It also arises in construction of ASD manifolds by twistor and gluing methods.  In this article we give conformally invariant conditions which imply an ASD manifold of positive Yamabe type is unobstructed.

\end{abstract}

\section{Introduction}

Let $M^4$ be a smooth, closed, oriented four-manifold.  Given a Riemannian metric $g$ on $M^4$, the bundle of two-forms $\Lambda^2 = \Lambda^2(M^4)$ splits into the sub-bundles of self-dual and anti-self=dual two-forms under the action of the Hodge $\star$=operator:
\begin{align*}
\Lambda^2 = \Lambda^2_{+} \oplus \Lambda^2_{-}.
\end{align*}
By a result of Singer-Thorpe \cite{ST}, the curvature operator $Rm : \Lambda^2 \rightarrow \Lambda^2$ has a canonical block decomposition of the form
	\begin{align*}
	Rm = \left(\begin{array}{ll}
	A^{+} &  B  \\
	B^t & A^{-}
	\end{array}
\right),
	\end{align*}
where $A^{\pm} : \Lambda^2_{\pm} \rightarrow \Lambda^2_{\pm}$ and $B : \Lambda^2_{+} \rightarrow \Lambda^2_{-}$.  If $W$ denotes the Weyl tensor, then $W^{\pm} : \Lambda^2_{\pm} \rightarrow \Lambda^2_{\pm}$, and
\begin{align*}
A^{\pm} = W^{\pm} + \frac{1}{12} R \, I,
\end{align*}
where $I$ is the identity and $R$ is the scalar curvature.

\begin{definition} We say that $(M^4,g)$ is {\em anti-self-dual} (ASD) if $W^{+}_g \equiv 0$.   \end{definition}

The notion of (anti-)self-duality is conformally invariant: if $W^{+}_g = 0$ for a metric $g$ and $\tilde{g} = e^f g$, then $W^{+}_{\tilde{g}} = 0$.  This property will be crucial for the proof of our main result below.

There are topological obstructions to the existence of ASD metrics.  By the Hirzebruch signature formula,
\begin{align} \label{SF}
48 \pi^2 \tau(M^4) = \int \left( |W^{+}_{g}|^2 - |W^{-}_{g}|^2 \right) \, dv_{g},
\end{align}
where $\tau(M^4)$ is the signature of the intersection form on $H^2_{dR}(M^4)$.  In particular, we see that if $(M^4,g)$ is ASD then $\tau(M^4) \leq 0$, with equality if and only if $g$ is LCF.  If $(M^4,g)$ is ASD with positive scalar curvature, then the intersection form is actually definite (see Proposition 1 of \cite{LeBrunPAMS}).  To see this, we first observe that the splitting of $\Lambda^2(M^4)$ induces a splitting on the space of harmonic two-forms, hence $H^2_{dR}(M^4) = H^2_{+}(M^4) \oplus H^2_{-}(M^4)$.  If $\omega \in H^2_{+}(M^4)$, then the Weitzenb\"ock formula for the Hodge laplacian $\Delta_2$ is given by
\begin{align} \label{WFHodge}
\Delta_2 \omega = \Delta \omega + 2 W^{+}(\omega) - \frac{1}{3}R \omega,
\end{align}
where $\Delta = g^{ij} \nabla_i \nabla_j$ is the rough laplacian.  If $(M^4,g)$ is ASD and the scalar curvature $R > 0$, then (\ref{WFHodge}) immediately implies $\omega = 0$.

Examples of ASD manifolds include locally conformally flat (LCF) manifolds, since in dimensions greater than three LCF is equivalent to the vanishing of the Weyl tensor.  In particular, $S^4$ endowed with the round metric $g_c$ is ASD.  Non-simply connected examples include the product metric on $S^3 \times S^1$, and more generally the metrics constructed via gluing on the connected sums $S^3 \times S^1 \hash \cdots \hash S^3 \times S^1$.

A non-LCF example is given by complex projective space with the Fubini-Study metric, and we take the opposite of the its natural orientation as a complex manifold; i.e., $(-\mathbb{CP}^2, g_{FS})$.   Any scalar-flat K\"ahler metric is also ASD, since the K\"ahler condition implies that $W^{+}$ is determined by $R$ (see \cite{Derdzinski}, Section 3).   There are many constructions ASD manifolds in the literature; see for example \cite{Poon86}, \cite{Poon92}, \cite{Leb91}, \cite{DF}, \cite{Floer}, \cite{Taubes}, and the references in Lecture 6 of \cite{Via}.

Roughly speaking, the constructions of ASD manifolds are based on either `twistor' or `analytic' methods.  The former approach relies on the so-called Penrose correspondence, which will play no role in our work but is of profound importance in the study of ASD manifolds.  Briefly, the unit sphere bundle $\mathcal{Z}$ of $\Lambda^2_{+}(M^4)$ carries a canonical complex structure.  As shown in \cite{AHS78}, this complex structure is integrable if and only if the metric is ASD.   Therefore, we can associate to any ASD manifold a complex manifold of (complex) dimension three, called the {\em twistor space} of $(M^4,g)$.  This important observation allows one to use methods of complex geometry to study the existence and deformation theory of ASD conformal structures.

Analytic methods involve the construction of an ASD metric on the connected sum of two manifolds admitting ASD metrics via perturbative methods.  As in other geometric gluing constructions, if $(M_1,g_1)$ and $(M_2, g_2)$ are ASD manifolds, one first constructs a metric $h$ on the connected sum $M_1 \hash M_2$ which is ``approximately'' ASD; i.e., $W^{+}(h)$ is small in some appropriately defined norm.  This reduces the problem to the study of the mapping properties of the linearized operator, in order to perturb $h$ to produce an actual ASD metric.  To make this more precise, we now introduce the ASD deformation complex.

\smallskip

\subsection{The ASD deformation complex}

Let $\mathcal{M}(M^4)$ be the space of smooth Riemannian metrics on $M^4$, and $\mathcal{R}(M^4)$ the bundle of algebraic curvature tensors.  We can view $W^{+}$ as a mapping
\begin{align*}
W^{+} : \mathcal{M}(M^4) \rightarrow \mathcal{R}(M^4).
\end{align*}
Let $g \in \mathcal{M}(M^4)$ be an ASD metric.  We can identify the formal tangent space of $\mathcal{M}$ at $g$ with sections of the bundle of symmetric two-tensors, $S^2(T^{*}M^4)$.  Let
\begin{align}
\mathcal{D} : \Gamma( S^2(T^{*}M^4) ) \rightarrow \Gamma( \mathcal{R}(M^4))
\end{align}
denote the linearization of $W^{+}$ at $g$; i.e., for $h \in \Gamma(S^2(T^{*}M^4))$,
\begin{align*}
\mathcal{D}_g h = \frac{d}{ds} W^{+}(g + s h) \big|_{s=0}.
\end{align*}
The choice of a conformal class of metrics $[g]$ determines the bundle of algebraic Weyl tensors, $\mathcal{W} = \mathcal{W}(M^4,[g])$, and the sub-bundles $\mathcal{W}^{\pm} \subset \mathcal{W} \subset S^2_0(\Lambda^2_{+})$, where $S^2_0(\Lambda^2_{+})$ is the bundle of symmetric, trace-free endomorphisms of $\Lambda^2_{+}$.  Note that
\begin{align} \label{Ddef0}
\mathcal{D}_g : \Gamma( S^2(T^{*}M^4)) \rightarrow \Gamma( \mathcal{W}^{+}) \subset \Gamma( S^2_0(\Lambda^2_{+})).
\end{align}
In fact, since $g$ is ASD, by conformal invariance $\mathcal{D}_g (f g) = 0$ for any $f \in C^{\infty}(M^4)$, hence
\begin{align} \label{Ddef}
\mathcal{D}_g : \Gamma( S^2_0(T^{*}M^4) ) \rightarrow \Gamma(\mathcal{W}^{+}).
\end{align}
We also let
\begin{align} \label{dstar}
\mathcal{D}^{*}_g : \Gamma(\mathcal{W}^{+}) \rightarrow \Gamma( S^2_0(T^{*}M^4) )
\end{align}
denote the $L^2$-formal adjoint of $\mathcal{D}_g$.  Although the formula for $\mathcal{D}_g$ is somewhat involved, the formula for $\mathcal{D}^{*}$ is much more compact (see Proposition A.4 of \cite{Itoh93}):
\begin{align} \label{Dstar}
\mathcal{D}^{*}U_{ij} = 2 \left( \nabla^k \nabla^{\ell} U_{ikj\ell} + P^{k \ell}U_{ikj\ell}\right),
\end{align}
where $P$ denotes the Schouten tensor (see Section \ref{TractorSec}).

Let $\mathcal{K}_g : \Gamma(T^{*}M^4) \rightarrow S^2_0(T^{*}M^4)$ denote the Killing operator:
\begin{align*}
\mathcal{K}_g(\omega)_{ij} = \nabla_i \omega_j + \nabla_j \omega_i - \frac{1}{2} (\delta_g \omega) \, g_{ij},
\end{align*}
where $\delta_g \omega = \nabla^k \omega_k$ is the divergence of $\omega$.  The kernel of $\mathcal{K}$ consists of those one-forms whose
dual vector field are conformal Killing.  Moreover, by diffeomorphism and conformal invariance,
\begin{align*}
\Ima \mathcal{K} \subset \ker \mathcal{D}.
\end{align*}

The {\em ASD deformation complex} is given by
\begin{align} \label{ASDcx}
\Gamma(T^{*}M^4) \xrightarrow{\mathcal{K}} \Gamma( S^2_0(T^{*}M^4)) \xrightarrow{\mathcal{D}} \Gamma( S^2_0(\Lambda^2_{+})).
\end{align}
This complex is elliptic; see \cite{KK}, Section 2.  The associated cohomology groups are given by
\begin{align} \label{coho} \begin{split}
H^0_{ASD}(M^4,g) &= \ker \mathcal{K}_g, \\
H^1_{ASD}(M^4,g) &= \{ h \in \Gamma( S^2_0(T^{*}M^4)) \, : \, \delta_g h = 0, \ \mathcal{D}_g h = 0 \}, \\
H^2_{ASD}(M^4,g) &= \ker \mathcal{D}^{*}_g,
\end{split}
\end{align}
where for $h \in \Gamma( S^2(T^{*}M^4))$, $(\delta_g h)_j = g^{ik}\nabla_k h_{ij}$ is the divergence.  The Atiyah-Singer index theorem can
be used to calculate the index of (\ref{ASDcx})\footnote{This was calculation was done by Singer in unpublished notes, but can be found in the literature for example in \cite{KK}}:
\begin{align*}
\mbox{Ind}_{ASD}(M^4,g) = \frac{1}{2} \left( 15 \chi\left(M^4\right) + 29 \tau\left(M^4\right) \right).
\end{align*}

Vanishing of the cohomology groups also provides information on the local structure of the moduli space of
ASD conformal structures:

\begin{proposition} (See \cite{Itoh93}, \cite{Via}) Suppose $(M^4,g)$ is ASD with
\begin{align*}
H^0_{ASD}(M^4,g) = \{ 0 \}, \ \ H^2_{ASD}(M^4,g) = \{ 0 \}.
\end{align*}
Then the moduli space of anti-self-dual conformal structures near $g$ is a smooth, finite-dimensional manifold of dimension $\dim H^1_{ASD}(M^4,g)$.
\end{proposition}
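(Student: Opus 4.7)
The plan is to exhibit the moduli space as the local zero set of a smooth Fredholm map and apply the Banach space implicit function theorem, after first cutting down to a slice that eliminates the gauge group acting by diffeomorphisms and conformal rescalings. Under this scheme the three cohomology groups in (\ref{coho}) acquire direct interpretations: $H^0_{ASD}$ is the isotropy of the gauge action at $g$, $H^1_{ASD}$ will be the formal tangent space to the moduli space, and $H^2_{ASD}$ will appear as the cokernel of the linearized equation.

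Because $W^{+}$ is conformally invariant and natural under diffeomorphisms, every conformal class sufficiently close to $[g]$ is represented by $g + h$ for some $h \in \Gamma(S^2_0(T^*M^4))$, and an Ebin-type slice theorem (using ellipticity of $\mathcal{K}_g^{*}\mathcal{K}_g$ together with the hypothesis $\ker \mathcal{K}_g = H^0_{ASD} = \{0\}$) further restricts $h$ to the divergence-free part $\ker \delta_g$. This yields a bijection between gauge orbits of metrics close to $g$ and a neighborhood of $0$ in the divergence-free, trace-free tensors. On Sobolev completions one then sets
\begin{equation*}
\Phi : U \longrightarrow H^{k}(\mathcal{W}^{+}), \qquad \Phi(h) = W^{+}(g + h),
\end{equation*}
where $U$ is a neighborhood of $0$ in the $H^{k+2}$-completion of the slice, with $k$ taken large. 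The map $\Phi$ is smooth, and elliptic regularity for the equation $\Phi(h) = 0$ promotes $H^{k+2}$-solutions to smooth tensors, so the local moduli space is identified with $\Phi^{-1}(0)$.

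The derivative $d\Phi_0$ is precisely the restriction of $\mathcal{D}_g$ to the slice of divergence-free, trace-free tensors. Ellipticity of the complex (\ref{ASDcx}) makes $d\Phi_0$ Fredholm; by (\ref{coho}) its kernel is $H^1_{ASD}(M^4,g)$, and the $L^2$-orthogonal complement of its image is identified with $\ker \mathcal{D}_g^{*} = H^2_{ASD}(M^4,g)$. The hypothesis $H^2_{ASD}(M^4,g) = \{0\}$ therefore makes $d\Phi_0$ surjective. Choosing a closed complement of the finite-dimensional kernel $H^1_{ASD}$ in the domain and applying the Banach space implicit function theorem produces a smooth local chart for $\Phi^{-1}(0)$ parametrized by a neighborhood of $0$ in $H^1_{ASD}(M^4,g)$. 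Combining this with the slice construction realizes the local moduli space as a smooth manifold of dimension $\dim H^1_{ASD}(M^4,g)$.

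The main obstacle is the rigorous construction of the slice for the combined nonlinear gauge action. One needs the $L^2$-orthogonal decomposition
\begin{equation*}
\Gamma(S^2_0(T^*M^4)) = \Ima \mathcal{K}_g \oplus \ker \delta_g,
\end{equation*}
which follows from ellipticity of $\mathcal{K}_g^{*}\mathcal{K}_g$ together with the hypothesis $\ker \mathcal{K}_g = \{0\}$, and then an inverse-function-theorem argument on the gauge action itself (rather than only on its linearization) to upgrade this infinitesimal splitting to a genuine smooth local slice. Once this is in hand, matching the analytic kernel and cokernel of $d\Phi_0$ with the cohomology groups in (\ref{coho}) is routine, and the implicit function theorem completes the proof.
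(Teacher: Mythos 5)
Your slice-plus-implicit-function-theorem argument is exactly the standard proof of this proposition; the paper itself offers no proof but defers to \cite{Itoh93} and \cite{Via}, where essentially this argument (Ebin-type slice for the conformal diffeomorphism gauge action using $H^0_{ASD}=\{0\}$, Fredholmness of $\mathcal{D}_g$ from ellipticity of the complex, surjectivity from $H^2_{ASD}=\ker\mathcal{D}^{*}_g=\{0\}$) is carried out. The only technical point you elide is that $W^{+}(g+h)$ naturally takes values in $\mathcal{W}^{+}$ of the metric $g+h$ rather than of $g$, so one must fix the target via the standard identification of $\Lambda^2_{+}(g+h)$ with $\Lambda^2_{+}(g)$ for $h$ small; this is routine and does not affect the linearization at $h=0$.
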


This leads to the following definition:

\begin{definition}  Let $(M^4,g)$ be ASD.  We say that $(M^4,g)$ is {\em unobstructed} if $H^2_{ASD}(M^4,g) = \{ 0 \}$.
\end{definition}

By the work of Floer \cite{Floer} and Donaldson-Friedman \cite{DF}, if $(M_1, g_1)$ and $(M_2, g_2)$ are unobstructed ASD manifolds, then the connected sum $M_1 \hash M_2$ admits an ASD metric.  Thus, we are lead to the question:  under what condition is an ASD manifold unobstructed?  The following conjecture is often attributed to Singer:

\begin{conjecture} Let $(M^4,g)$ be ASD. If the Yamabe invariant of $(M^4,g)$ is positive, then $(M^4,g)$ is unobstructed.
\end{conjecture}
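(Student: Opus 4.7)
By conformal covariance of $\mathcal{D}^*$ (with its natural conformal weights), the condition $H^2_{ASD}(M^4,g)=\{0\}$ depends only on the conformal class $[g]$. The positive-Yamabe hypothesis therefore lets me fix a representative in $[g]$ with $R$ equal to a positive constant. Fix $U\in\Gamma(\mathcal{W}^+)$ with $\mathcal{D}^*_g U=0$; the goal is to show $U\equiv 0$.

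The plan is to establish an integral Weitzenb\"ock identity
\[
\int_M \bigl( |\nabla U|^2 + \mathcal{Q}(U) \bigr)\, dv_g = 0,
\]
where $\mathcal{Q}(U)$ is a pointwise quadratic form in $U$ with coefficients in $R$, $\tfP$, $W^+$, and $W^-$. One direct route is to start from the pointwise equation $\nabla^{k}\nabla^{\ell}U_{ikj\ell} = -P^{k\ell}U_{ikj\ell}$, pair it with a carefully chosen quadratic tensor built from $U$, and integrate by parts using the $L^2$ duality $\langle\mathcal{D}^*U,h\rangle=\langle U,\mathcal{D}h\rangle$ together with the algebraic Bianchi symmetries of $\mathcal{W}^+$. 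A more conceptual route, and the one that exploits the tractor machinery set up in Section~\ref{TractorSec}, is to prolong the overdetermined system $\mathcal{D}^*U=0$ to a first-order closed system on a standard tractor bundle and then read $\mathcal{Q}$ off from the induced tractor curvature via a tractor Bochner identity. Either way, irreducible decomposition collects the curvature contributions into separate $R$, $W^\pm$, and $\tfP$ pieces.

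Under the ASD hypothesis $W^+\equiv 0$ the $W^+$ contribution to $\mathcal{Q}$ disappears. The principal obstacle is to control the remaining unsigned contributions from $W^-$ and from the traceless Schouten $\tfP$. For the $W^-$ coupling I would exploit the fact that $U$ acts as an endomorphism of $\Lambda^2_+$ while $W^-$ acts only on $\Lambda^2_-$, in the hope that after the full integration by parts the pointwise $\langle W^-(U),U\rangle$ pairing vanishes algebraically, as the irreducible piece of the curvature of the bundle $S^2_0(\Lambda^2_+)$ pulls only from $W^+$ and $R$. For the $\tfP$ coupling, which enters through the zeroth-order term of $\mathcal{D}^*$ itself and cannot be eliminated by bundle considerations, I would hope the tractor prolongation yields a complete-square rearrangement in which the $\tfP$-term is absorbed against commutator terms produced by the self-dual Bianchi identities on $U$.

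The hardest step is precisely this last cancellation: carrying out the tractor computation honestly and checking that no residual unsigned coupling remains. If this algebraic miracle goes through, $\mathcal{Q}(U)$ collapses to a positive multiple of $R|U|^2$, and then positivity of the constant Yamabe scalar curvature forces the integrand $|\nabla U|^2 + cR|U|^2$ to vanish identically, so $U\equiv 0$ as required. Without the cancellation, the natural identity only yields unobstructedness under a smallness assumption on $W^-$ and $\tfP$ relative to $R$; turning that conditional statement into an unconditional one under only positive Yamabe is the essence of Singer's conjecture and is where all the work must be concentrated.
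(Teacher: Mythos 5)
What you have written is a research plan, not a proof, and the statement you were asked to prove is in fact stated in the paper as an open conjecture: the paper itself does not prove it. What the paper proves is Theorem \ref{Main}, which requires the additional topological hypothesis (\ref{sig}), namely $2\chi(M^4)+3\tau(M^4)\geq -\tfrac{1}{24\pi^2}Y(M^4,[g])^2$, and the authors explicitly remark that removing this hypothesis (i.e.\ proving the full conjecture) remains open. Your own closing sentence concedes the point: without the hoped-for cancellation you only get a conditional statement, and the cancellation is exactly what you do not supply.

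To be concrete about where your sketch diverges from what can actually be carried out. Your hope that the $\langle W^-(U),U\rangle$ coupling vanishes is vindicated: in the paper's Weitzenb\"ock identities (\ref{I}) and (\ref{II}) of Theorem \ref{BFThm}, derived by viewing $U\in\ker\mathcal{D}^*$ as a harmonic self-dual two-form $z(U)$ valued in the adjoint tractor bundle, the only curvature entering is $W^+$ (killed by the ASD hypothesis), $\tfP$, and $J$. But your hoped-for ``complete-square rearrangement'' absorbing the $\tfP$-terms does not occur: those terms survive in (\ref{I})--(\ref{II}) and are genuinely unsigned. Moreover, your choice of the Yamabe (constant scalar curvature) representative is the wrong gauge, since constant $R$ gives no pointwise control of $\tfP$. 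The paper instead chooses a conformal representative solving a fourth-order equation coming from a modified determinant functional, so that $J>0$ and $\Delta J\leq -|\tfP|^2+\tfrac{15}{4}J^2$ (Theorem \ref{LDThm}); the $-|\tfP|^2$ on the right is precisely what is used, after multiplying the Bochner identity by $J$ and applying Cauchy--Schwarz, to dominate the unsigned $\tfP$-couplings. The existence of such a representative is where the extra hypothesis (\ref{sig}) enters (via $\int Q\geq -\tfrac{1}{12}Y^2$ and the Chang--Yang existence theory). So the gap in your proposal is not a technicality: it is the entire content of the problem, and even the paper's machinery closes it only under an additional assumption.
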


For ASD Einstein manifolds the operators $\mathcal{D}\mathcal{D}^{*}$ and $\mathcal{D}^{*}\mathcal{D}$ were explicitly computed in \cite{Itoh} and \cite{Kob}.  It follows from these calculations (and can also be seen by more or less direct calculation) that $(S^4,g_0)$ and $(\mathbb{CP}^2,g_{FS})$ are unobstructed.  We remark that for non-Einstein ASD manifolds, the formulas for these operators are fairly intractable.

The vanishing of $H^2_{ASD}(M^4,[g])$ can sometimes be verified when the twistor space is explicitly known. For example, the LCF metrics
$k \hash S^3 \times S^1$ (\cite{Leb92}, Theorem 8.2; \cite{ES}), and the ASD metrics on $m \hash \left(-\mathbb{CP}^2\right)$ constructed by LeBrun (\cite{Leb91}), are unobstructed.  Our goal in this paper is to provide a criterion for the vanishing of $H^2_{ASD}$ that only involves conformal invariants of the ASD manifold (and in particular does not depend on verifying any properties of the twistor space).   Our main result is the following:

\begin{theorem}  \label{Main} Suppose $(M^4,g)$ is ASD with Yamabe invariant $Y(M^4,[g]) > 0.$  If
\begin{align} \label{sig}
2 \chi(M^4) + 3 \tau(M^4) \geq -\frac{1}{24 \pi^2}Y(M^4,[g])^2,
\end{align}
then $(M^4,g)$ is unobstructed.
\end{theorem}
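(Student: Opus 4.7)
The strategy is to combine a Weitzenböck identity for $\mathcal{D}^*$ on $\Gamma(\mathcal{W}^+)$ with the Gauss--Bonnet--Chern and Hirzebruch signature formulas, in the spirit of curvature-versus-topology arguments of Gursky and Gursky--LeBrun. Since $H^2_{ASD}(M^4,[g])$ is a conformal invariant, I would first pass to a Yamabe representative $g \in [g]$, so that $R_g$ is a positive constant and $\int_M R_g^2 \, dv_g = Y(M^4,[g])^2$. Fix $U \in \ker \mathcal{D}^*_g \subset \Gamma(\mathcal{W}^+)$; the goal is to show $U \equiv 0$.

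To translate the topological hypothesis into curvature data, combine (\ref{SF}) with the four-dimensional Gauss--Bonnet--Chern formula and use $W^+_g \equiv 0$ to obtain
\begin{align*}
4\pi^2\bigl(2\chi(M^4)+3\tau(M^4)\bigr) = \int_M \Bigl(\tfrac{1}{24}R^2 - \tfrac{1}{2}|E|^2\Bigr)\, dv_g,
\end{align*}
where $E$ is the trace-free Ricci. In the Yamabe gauge the hypothesis (\ref{sig}) therefore becomes the explicit $L^2$ bound
\begin{align*}
\int_M |E|^2\, dv_g \leq \tfrac{5}{12}\, Y(M^4,[g])^2.
\end{align*}

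Next, derive a Weitzenböck-type formula for $U$. Starting from $\nabla^k\nabla^\ell U_{ikj\ell} + P^{k\ell}U_{ikj\ell} = 0$, I would pair the equation against $U$ via its natural inner product and integrate by parts; commuting covariant derivatives produces a rough-Laplacian term, and exploiting the algebraic symmetries of $U \in \mathcal{W}^+$, the self-duality on both index pairs, and the ASD condition $W^+=0$ (which kills the cubic self-dual Weyl term that would otherwise appear) should yield an identity of the schematic form
\begin{align*}
0 = \int_M \Bigl(|\nabla U|^2 + \alpha R|U|^2 + \beta\, E\star U\star U\Bigr)\, dv_g,
\end{align*}
for specific conformally natural constants $\alpha > 0$ and $\beta \in \mathbb{R}$. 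The tractor calculus of Section \ref{TractorSec} is the natural setting for carrying out this computation cleanly and pinning down the constants --- an important consideration given that, as noted in the introduction, the direct formulas for $\mathcal{D}\mathcal{D}^*$ are fairly intractable in the non-Einstein case.

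To close the argument, bound the indefinite cubic term by Cauchy--Schwarz, $|\int E\star U\star U \, dv_g| \leq C (\int |E|^2)^{1/2}(\int |U|^4)^{1/2}$, then apply the Yamabe--Sobolev inequality $Y(M^4,[g])\,(\int|U|^4)^{1/2} \leq \int (|\nabla|U||^2 + \tfrac{1}{6} R |U|^2)\, dv_g$ together with a refined Kato estimate $|\nabla|U||^2 \leq \kappa |\nabla U|^2$ with $\kappa < 1$ for self-dual sections satisfying $\mathcal{D}^*U = 0$. The topological bound $\int|E|^2 \leq \tfrac{5}{12}Y^2$ is then precisely sharp enough to force $0 \geq c \int_M (|\nabla U|^2 + R|U|^2)\, dv_g$ for some $c > 0$, whence $U\equiv 0$ by positivity of $R$. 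The principal obstacle in this plan is producing the Weitzenböck identity with sharp constants $\alpha,\beta$ that dovetail with the coefficient $\tfrac{1}{24\pi^2}$ in (\ref{sig}); this is where the ASD hypothesis and the tractor machinery are both essential.
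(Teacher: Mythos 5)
Your opening moves are sound and match the paper's: conformal invariance of $\ker\mathcal{D}^*$, the tractor-theoretic route to a Weitzenb\"ock identity, and the translation of (\ref{sig}) via Chern--Gauss--Bonnet and the signature formula. But the heart of the plan has two genuine gaps, and the paper in fact takes a substantially different route from ``Yamabe gauge $+$ Sobolev $+$ refined Kato.'' First, your schematic Weitzenb\"ock formula $0 = \int(|\nabla U|^2 + \alpha R|U|^2 + \beta E\star U\star U)$ omits the first-order cross terms that are unavoidably present: the actual identity (the paper's Theorem \ref{BFThm}) reads
\begin{align*}
\tfrac{1}{2}\Delta U_{ijk\ell} = \nabla_k(\delta U)_{\ell ij} - \nabla_\ell(\delta U)_{kij} + U_{ijkm}P_\ell^{\ m} - U_{ij\ell m}P_k^{\ m} + J\,U_{ijk\ell},
\end{align*}
and the $\nabla(\delta U)$ terms cannot be bundled into a Kato-type estimate on $|\nabla|U||$. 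Handling them requires a \emph{second} Weitzenb\"ock identity for $\Delta(\delta U)$ (also in Theorem \ref{BFThm}), and the interplay of the two is an essential part of the argument that your plan does not anticipate. Second, the refined Kato inequality $|\nabla|U||^2 \leq \kappa|\nabla U|^2$ with $\kappa < 1$ for sections in $\ker\mathcal{D}^*$ is asserted but never established, and without a concrete value of $\kappa$ there is no way to verify that the chain of inequalities closes at exactly the constant $\frac{1}{24\pi^2}$.

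The paper avoids Sobolev and Kato inequalities entirely, and it does \emph{not} use the Yamabe representative. Instead, Theorem \ref{LDThm} produces, by maximizing a modification of the Branson--{\O}rsted determinant functional (the new $\gamma_4\,IV$ term in the Appendix), a unit-volume conformal metric with $J_g>0$ satisfying the pointwise differential inequality $\Delta J_g \leq -|\mathring{P}_g|^2 + \frac{15}{4}J_g^2$. One then multiplies the Bochner formula for $\Delta|U|^2$ by the non-constant function $J$, integrates by parts to bring in $\Delta J$, and uses the pointwise inequality to convert the resulting term into good-signed curvature-weighted terms; the second Weitzenb\"ock identity supplies the cancellations needed to absorb the remaining $\nabla(\delta U)$ cross terms via a Cauchy--Schwarz step that is sharp precisely because of the shape of the tensor $T_{km} = \mathring{P}_{mk} + \frac{3}{4}Jg_{km}$. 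This $J$-weighted Bochner scheme is what makes the constant in (\ref{sig}) come out; the ``curiously'' in the introduction signals that the more obvious Yamabe-gauge approach you outline apparently does not reach the stated threshold.
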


The proof of Theorem \ref{Main} relies on two key ideas.  The first is that any element $U \in \ker \mathcal{D}^*$ can be associated to a self-dual harmonic two-form $z = z(U) \in \Lambda^2_{+}\left( \mathcal{E}\right)$ taking its values in the {\em adjoint tractor bundle} (see Section \ref{TractorSec}).   Therefore, $z$ satisfies a twisted version of the usual Weitzenb\"ock formula for self-dual (real-valued) two-forms.  This twisted version provides us with two identities for $U$ (see Theorem \ref{BFThm}).

The second key idea is to make a judicious choice of conformal representative in order to show that the condition (\ref{sig}) implies the vanishing of $U$.  As explained in Section \ref{MainProof}, we choose a conformal metric whose scalar curvature satisfies a differential inequality involving the Schouten tensor (see Theorem \ref{LDThm}), and is adapted to the curvature terms appearing in the Weitzenb\"ock formula(s) for $U$.  Curiously, to prove the existence of this metric we consider a modification of the functional determinant of an conformally covariant elliptic operator first computed by Branson and {\O}rsted \cite{BO}.  An robust theory for the existence of critical points of this functional was developed by Chang and Yang \cite{CYAnnals}, and we are able to show that their ideas also give existence for our modified functional.

We remark that the lower bound in (\ref{sig}) can likely be improved, and it is possible that the techniques of this paper can be used to ultimately remove any additional topological assumption.


\subsection{Organization}  In Section \ref{TractorSec} we provide the necessary background on the tractor bundle and associated connection and metric.  The main result (for our purposes) is Proposition \ref{BochnerProp}, in which we associate to $U \in \ker \mathcal{D}^*$ a twisted SD harmonic two-form $z = z(U) \in \Lambda^2_{+}\left( \mathcal{E}\right)$.  In Section \ref{WFSec} we compute two Weitzenb\"ock formulas for $U$ that follow from this correspondence.   In Section \ref{MainProof} we give the proof of Theorem \ref{Main}, and relegate the PDE aspects of our work to the Appendix. 

\subsection{Acknowledgements}  The authors would like to express their sincere thanks to Claude LeBrun for initially suggesting this problem, and for being an invaluable resource during our work.

The first author is supported by the Royal Society of New Zealand
Marsden grant 19-UOA-008. The second author is supported by NSF grant
DMS-2105460 and a Simons Foundation Fellowship in Mathematics, award
923208.

This project began during a visit of the second author to the
Department of Mathematics at the University of Auckland, in September
2022.  The author would like to thank the department for its support
and hospitality.

\section{Background and the interpretation via tractor calculus} \label{TractorSec}

\subsection{Some conventions for Riemannian geometry}
\newcommand{\cc}{\boldsymbol{c}}
\newcommand{\ce}{\mathcal{E}}
\newcommand{\cV}{\mathcal{V}}
\newcommand{\si}{\sigma}
\newcommand{\cT}{\mathcal{T}}
\newcommand{\cK}{\mathcal{K}}
\newcommand{\ID}{\mathrm{ID}}
\newcommand{\om}{\omega}
\newcommand{\Up}{\Upsilon}

\newcounter{mnotecount}
\newcommand{\mnotex}[1]
{\protect{\stepcounter{mnotecount}}$^{\mbox{\footnotesize $\bullet$\themnotecount}}$
\marginpar{
\raggedright\tiny\em
$\!\!\!\!\!\!\,\bullet$\themnotecount: #1} }
\newcommand{\edz}[1]{\mnotex{#1}}

\newcommand{\lpl}{
  \mbox{$
  \begin{picture}(12.7,8)(-.5,-1)
  \put(2,0.2){$+$}
  \put(6.2,2.8){\oval(8,8)[l]}
  \end{picture}$}}

For index calculations we will use Penrose's abstract index notation,
unless otherwise indicated. In this we write $\ce_a$ and $\ce^a$ as
alternative notations for, respectively the cotangent and tangent
bundles and the contraction $\om (v)$ of a 1-form $\om$ with a tangent
vector $v^a$ is written with a repeated index $\om_a v^a$ (mimicking
the Einstein summation convention). Tensor bundles are denoted then by
adorning the symbol $\ce$ with appropriate indices and
sometimes also indicating symmetries. For example $\ce_{(ab)}$ is the
notation for $S^2T^*M$, the subbundle of symmetric tensors in
$T^*M\otimes T^*M$.

Then our convention for the
Riemann tensor $R_{ab}{}^c{}_d$ is such that
\begin{align} \begin{split} \label{riemdef}
\left[ \nabla_a,  \nabla_b \right] v^c &= R_{ab}{}^c{}_d v^d, \\
\left[ \nabla_a,  \nabla_b \right] \omega_c &= R_{abc}{}^d \omega_d,
\end{split}
\end{align}
where $\nabla_a$ is the Levi-Civita connection of a metric $g_{ab}$, $v$ any tangent vector field, and $\omega$ is any one-form. Using the metric to raise and lower indices we have, for example $R_{abcd}=g_{ce}R_{ab}{}^e{}_d$. This may be decomposed:
\begin{equation} \label{riemdec}
R_{abcd} = W_{abcd} + 2 \left(g_{c[a}P_{b]d} + g_{d[b}P_{a]c}\right),
\end{equation}
where the completely trace-free part $W_{abcd}$ is the {\em Weyl tensor} and $P_{ab}$ is the {\em Schouten tensor}. It follows that
\begin{equation}
P_{ab} :=  \frac{1}{n-2} \left( R_{ab} - \frac{R}{2 \left( n-1 \right)} g_{ab}\right)
\end{equation}
where $R_{bc}= R_{ab}{}^a{}_c$ is the Ricci tensor and its metric trace $R=g^{ab}R_{ab}$ is scalar curvature. We will use $J$ to denote the metric trace of Schouten, i.e. $J := g^{ab} P_{ab}$. Lastly, we have
\begin{equation} \label{bachcotdef}
\begin{aligned}
C_{abc} & := 2 \nabla_{[b}P_{c]a}, \\
B_{ab} & := - \nabla^c C_{abc} + P^{dc} C_{dacb},
\end{aligned}
\end{equation}
where $C_{abc}$ and $B_{ab}$ are {\em Cotton} and {\em Bach} tensors, respectively. It should be noted that the Bianchi identities imply
\begin{equation} \label{bianchi}
 \left( n-3 \right) C_{abc}= \nabla^d W_{dabc}.
\end{equation}

\subsection{The tractor bundle and connection}

To treat and work with objects that are conformally invariant it is natural
work, at least partly, in the setting of conformal manifolds.
Here by a conformal manifold $(M,\cc)$ we mean a smooth manifold of
dimension $n\geq 3$ equipped with an equivalence class $\cc$ of Riemanian
metrics, where $g_{ab}$, $\widehat{g}_{ab} \in \cc$ means that
$\widehat{g}_{ab}=\Omega^2 g_{ab}$ for some smooth positive function
$\Omega$. On a general conformal manifold $(M,\cc)$, there is no
distinguished connection on $TM$. But there is an invariant and
canonical connection on a closely related bundle, namely the conformal
tractor connection on the standard tractor bundle, see
\cite{BEG,CGtams}.

Here we review the basic conformal tractor calculus on
Riemannian and conformal manifolds. See
\cite{BEG,curry-G,GP} for more details.
Unless stated otherwise, calculations will be done with the use of generic $g \in
\cc$.

 On an $n$-manifold $M$ the top exterior power of the tangent bundle $ \Lambda^{n} TM$ is a line bundle. Thus  its square
$\mathcal{K}:=(\Lambda^{n} TM)^{\otimes 2}$  is
canonically oriented and so one can take oriented roots of it: Given
$w\in \mathbb{R}$ we set
\begin{equation} \label{cdensities}
\ce[w]:=\mathcal{K}^{\frac{w}{2n}} ,
\end{equation}
and refer to this as the bundle of conformal densities. For any vector
bundle $\cV$, we write $\cV[w]$ to mean $\cV[w]:=\cV\otimes\ce[w]$.
For example, $\ce_{(ab)}[w]$ denotes the symmetric second tensor power
of the cotangent bundle tensored with $\ce[w]$, i.e. $S^2T^* M\otimes
\ce[w]$ on $M$. On a fixed Riemannian manifold $\cK$ is canonically
trivialised by the square of the volume form, and so $\cK$ and its
roots are not usually needed explicitly. However if we wish to change
the metric conformally, or work on a conformal structure then these
objects become important.

Since each metric in a conformal class determines a trivialisation of
$\cK$, it follows easily that on a conformal structure there is a
canonical section $\bg_{ab}\in \Gamma(\ce_{(ab)})[2]$. This has the
property that for each positive section $\si\in \Gamma(\ce_+ [1])$
(called a {\em scale}) $g_{ab}:=\si^{-2}\bg_{ab}$ is a metric in
$\cc$. Moreover, the Levi-Civita connection of $g_{ab}$ preserves
$\si$ and therefore $\bg_{ab}$. Thus it makes sense to use the
conformal metric to raise and lower indices, even when we are chosing
a particular metric $g_{ab} \in \cc$ and its Levi-Civita connection
for calculations.  It turns out that this simplifies many
computations, and so in this section we will do that without further
mention. (In the subsequent sections we will work with a fixed metric and use that to trivialise density bundles -- so indices will be raised and lowered using the metric.)

Considering Taylor series for sections of $\ce[1]$ one recovers the jet exact sequence at 2-jets,
\begin{equation}\label{J2}
0\to \ce_{(ab)}[1]\stackrel{\iota}{\to} J^2\ce[1]\to J^1\ce[1]\to 0 .
\end{equation}
Note that $J^2\ce[1]$ and its sequence \eqref{J2} are canonical objects on any smooth manifold. But with a
 conformal structure $\cc$ we have the orthogonal decomposition of $\ce_{ab}[1]$  into trace-free and trace parts
\begin{equation}
\ce_{ab}[1]= \ce_{(ab)_0}[1]\oplus \bg_{ab}\cdot \ce[-1] .
\end{equation}
Thus we can canonically quotient $J^2\ce[1]$ by the image of
$\ce_{(ab)_0}[1]$ under $\iota$ (in (\ref{J2})). The resulting
quotient bundle is denoted $\cT^*$ (or $\ce_A$ in abstract indices),
and called the conformal cotractor bundle. Observing that the
jet exact sequence at 1-jets (of $\ce[1]$),
$$ 
0\to \ce_{b}[1]\stackrel{\iota}{\to} J^1\ce[1]\to \ce[1]\to 0,
$$ 
we see at once that $\cT^*$ has a composition series
\begin{equation}\label{filt}
\cT^*=\ce[1]\lpl \ce_a [1] \lpl \ce[-1] ,
\end{equation}
meaning that $\ce[-1]$ is a subbundle of $\cT^*$ and the quotient
of $\cT^*$ by this (which is $J^1\ce[1]$) has $\ce_a [1]$ as a
subbundle, whereas there is a canonical projection $X:\cT^*\to \ce[1]$.
In abstact indices we write $X^A$ for this map and call it the {\em canonical tractor}.

Given a choice of metric $g \in\cc$, the formula
\begin{equation}\label{thomas-D}
  \sigma \mapsto \frac{1}{n} {[D_A \sigma]}_g :=
  \begin{pmatrix}
    \sigma \\
    \nabla_a \sigma \\
    - \frac{1}{n} \left( \Delta + J \right) \sigma
  \end{pmatrix}
\end{equation}
(where $\Delta $ is the Laplacian $\nabla^a\nabla_a$)
gives a second-order differential operator on $\ce[1]$ which is a linear map $J^2 \ce[1] \to \ce[1] \oplus \ce_a [1] \oplus \ce[-1]$ that clearly factors through $\cT^*$ and so determines an isomorphism
\begin{equation}\label{T_isom}
  \cT^* \stackrel{\sim}{\longrightarrow} {[\cT^*]}_g = \ce[1] \oplus \ce_a [1] \oplus \ce[-1].
\end{equation}

In subsequent discussions, we will use~\eqref{T_isom} to split the
tractor bundles without further comment.  Thus, given $g \in \cc$, an
element $V_A$ of $\ce_A$ may be represented by a triple
$(\si,\mu_a,\rho)$, or equivalently by
\begin{equation}\label{Vsplit}
  V_A=\si Y_A+\mu_a Z_A{}^a+\rho X_A.
\end{equation}
The last display defines the algebraic splitting operators
$Y:\ce[1]\to \cT^*$ and $Z :T^*M[1]\to \cT^*$ (determined by the
choice $g_{ab} \in \cc$) which may be viewed as sections $Y_A\in
\Gamma(\ce_A[-1])$ and $Z_A{}^a\in \Gamma(\ce_A{}^a [-1])$.  We call
these sections $X_A, Y_A$ and $Z_A{}^a$ \emph{tractor projectors}.

By construction the tractor bundle is conformally invariant, i.e. determined by $(M,\cc)$ and  indpenedent of any choice of $g\in\cc$. However the splitting \eqref{Vsplit} is not. Considering the transformation of the operator \eqref{thomas-D} determining the splitting we see that if $\widehat{g}=\Omega^2 f$
the components of an invariant section of $\cT^*$ should transform according to:
\begin{equation}\label{ttrans}
[\cT^*]_{\widehat{g}}\ni \left(\begin{array}{c}
\widehat{\sigma}\\
\widehat{\mu}_b\\
\widehat{\rho}
\end{array}\right)=
\left(\begin{array}{ccc}
1 & 0 & 0\\
\Upsilon_b & \delta^c_b & 0\\
-\frac{1}{2}\Upsilon^2 & -\Upsilon^c & 1
\end{array}\right)\left(\begin{array}{c}
\sigma\\
\mu_c\\
\rho
\end{array}\right)~\sim~ \left(\begin{array}{c}
\sigma\\
\mu_b\\
\rho
\end{array}\right)\in [\cT^*]_g,
\end{equation}
where $\Up_a=\Omega^{-1}\nabla_a \Omega$, and converesely this transformation of triples is the hallmark of an invariant tractor section. Equivalent to the last display is the rule for how the algebraic splitting operators transform
\begin{equation}\label{XYZtrans}
\textstyle
\widehat{X}_A=X_A, \quad  \widehat{Z}_A{}^{b}=Z_A{}^{b}+\Up^bX_A, \quad
\widehat{Y}_A=Y_A-\Up_bZ_A{}^{b}-\frac12\Up_b\Up^bX_A \, .
\end{equation}

Given a metric $g \in \cc$, and the corresponding splittings, as above, the tractor connection is given by the formula
 \begin{equation}\label{tr-conn}
 \nabla_a^\cT  \begin{pmatrix}
    \sigma \\
    \mu_b \\
    \rho
  \end{pmatrix} :=\begin{pmatrix}
   \nabla_a \sigma-\mu_a \\
    \nabla_a\mu_b+P_{ab}\si+\bg_{ab}\rho \\
    \nabla_a\rho- P_{ac}\mu^c
  \end{pmatrix} ,
\end{equation}
where on the right hand side the $\nabla$s are the Levi-Civita connection of $g$. Using the transformation of components, as in \eqref{ttrans}, and also the conformal transformation of the Schouten tensor,
\begin{equation}\label{Ptrans}
P^{\widehat{g}}_{\phantom{\widehat{g}}ab}=P_{ab}-\nabla_a\Upsilon_b
+\Upsilon_a\Upsilon_b-\frac{1}{2}g_{ab}\Upsilon_c\Upsilon^c,  \quad \widehat{g}=\Omega^2 g ,
\end{equation}
reveals that the triple on the right hand side transforms as a 1-form
taking values in $\cT^*$ -- i.e. again by \eqref{ttrans} except
twisted by $\ce_a$. Thus the right hand side of \eqref{tr-conn} is the
splitting into slots of a conformally invariant connection
$\nabla^\cT$ on (section of) the bundle $\cT^*$.

There is a nice conceptual origin for the connection \eqref{tr-conn}. Using \eqref{Ptrans} and the transformation of the Levi-Civita connection it is straightforward to verify that the equation
\begin{equation}\label{AE}
\nabla_{(a}\nabla_{b)_0} \si+ P_{(ab)_0}\si =0
\end{equation}
on conformal densities $\si\in \Gamma(\ce[1])$ is conformally
invariant. As this is an overdetermined PDE, solutions in general do
not exist.  Overdetermined linear PDE are typically studied by
prolongation and it is quickly verified that the tractor parallel
transport given by \eqref{tr-conn} is exactly the closed system that
arises from prolonging \eqref{AE} (see \cite{BEG,curry-G}). From this
observation, and the formula \eqref{tr-conn}, it follows that
non-trivial solutions of \eqref{AE} are non-vanishing on an open dense
set (for $M$ connected) on which the metric $\si^{-2}\bg_{ab}$ is
Einstein -- an observation that has a number of applications, see
\cite{curry-G,G-almost} and references therein.

The tractor bundle is also equipped with a conformally invariant signature $(n+1,1)$  metric $h_{AB} \in \Gamma \left(\ce_{(AB)} \right)$, defined as quadratic form by the mapping
\begin{equation}
[V_A]_g = \begin{pmatrix}
    \sigma \\
    \mu_a \\
    \rho
  \end{pmatrix} \mapsto \mu_a \mu^a +2  \si \rho =: h \left(V,V \right), and
\end{equation}
with the polarization identity. This is important not only by dint of
its conformal invariance, but it is easily checked that this {\em
  tractor metric} $h$ is preserved by $\nabla_a^\cT$, i.e. $
\nabla_a^\cT h_{AB} =0$. Thus it makes sense to use $h_{AB}$ (and its
inverse) to raise and lower tractor indices, and we do this henceforth
without further comment. In particular $X^A=h^{AB}X_B$ is the
canonical tractor (and hence our use of the same kernel symbol). For computations the table of
Figure \ref{tmf} is useful.
\begin{figure}[ht]
$$
\begin{array}{l|ccc}
& Y^A & Z^{Ac} & X^{A}
\\
\hline
Y_{A} & 0 & 0 & 1
\\
Z_{Ab} & 0 & \delta_{b}{}^{c} & 0
\\
X_{A} & 1 & 0 & 0
\end{array}
$$
\caption{Tractor inner product}
\label{tmf}
\end{figure}
We see that $h$ may be decomposed into a sum of projections
$$
h_{AB}=Z_A{}^aZ_{B}{}^b\bg_{ab}+X_AY_B+Y_AX_B\, .
$$

Finally for this section we note that, of course, the curvature of the tractor
connection $\kappa_{abCD}$ is determined by
$$
2 \nabla_{[a}^\cT \nabla_{b]}^\cT V^C = \kappa_{ab}{}^{C}{}_D V^D \quad \mathrm{for \ all} \quad V^C \in \Gamma \left(\ce^A \right),
$$
and can be written in terms of tractor projectors as
\begin{equation} \label{trc2}
  \kappa_{abCD} =W_{abcd} Z_{C}{}^c Z_D{}^d
+  C_{cab} Z_{C}{}^c X_D - C_{cab} X_C Z_D{}^c  \, .
\end{equation}
The bundle $\Lambda^2\cT$ is often termed the {\em adjoint tractor
  bundle}, as it is a vector bundle modelled on the Lie algebra of the
conformal group $SO(n+1,1)$. So, as expected, the tractor curvature is a 2-form taking values in this bundle.

\subsection{The differential splitting operator}\label{tsplit}

\newcommand{\cW}{\mathcal{W}}
\newcommand{\cD}{\mathcal{D}}

In dimensions $n\geq 4$ the tractor curvature is the image of a conformally invariant operator $z$ acting on the Weyl curvature. The operator is as follows:
\begin{lemma} \label{split-lemma} In any dimension $n\geq 4$ there is a conformally invariant differential map
  \begin{equation}\label{split-map}
z: \Gamma(\cW) \to \Gamma(\Lambda^2\otimes \Lambda^2\cT) ,
    \end{equation}
  given by
 \begin{align} \label{zdef} \begin{split}
     U_{ij}{}^k{}_\ell\mapsto z_{ijAB} &= U_{ij}^{\ \ k\ell} Z_{kA} Z_{\ell B}
     - \frac{1}{n-3}(\delta U)^k_{\ ij} \left( X_A Z_{kB} - Z_{kA} X_B \right)  \\
&= \frac{1}{2} U_{ij}^{\ \ k\ell} \left( Z_{kA} Z_{\ell B} - Z_{\ell A} Z_{k B} \right) - \frac{1}{n-3}(\delta U)^k_{\ ij} \left( X_A Z_{kB} - Z_{kA} X_B \right),
   \end{split}
 \end{align}
where
 \begin{align*}
 (\delta U)_{kij} = \nabla^m U_{mkij}.
 \end{align*}
    \end{lemma}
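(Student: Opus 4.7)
I would separately verify that (i) $z(U)$ has the claimed antisymmetries and (ii) $z(U)$ is conformally invariant. Antisymmetry in $[ij]$ is inherited from $U$ (and hence from $\delta U$). For $[AB]$, the second term of \eqref{zdef} is manifestly antisymmetric; for the first I use the antisymmetry of $U$ in $[k\ell]$ to rewrite $U_{ij}{}^{k\ell}Z_{kA}Z_{\ell B}=\tfrac12 U_{ij}{}^{k\ell}(Z_{kA}Z_{\ell B}-Z_{\ell A}Z_{kB})$, which is the second form displayed in \eqref{zdef} and makes the $[AB]$-antisymmetry explicit.

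For conformal invariance, fix $\widehat g=\Omega^2 g$ with $\Upsilon_a=\Omega^{-1}\nabla_a\Omega$ and substitute the projector transformations \eqref{XYZtrans}, i.e.\ $\widehat X_A=X_A$ and $\widehat Z_{Aa}=Z_{Aa}+\Upsilon_a X_A$, into \eqref{zdef}. Expanding the first term, the pure $\Upsilon_k\Upsilon_\ell X_AX_B$ contribution vanishes by the $[k\ell]$-antisymmetry of $U$, and the two mixed cross terms combine (after a $k\leftrightarrow\ell$ relabeling using the same antisymmetry) to leave a residue $U_{ij}{}^{k\ell}\Upsilon_k(X_AZ_{\ell B}-Z_{\ell A}X_B)$. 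The $\Upsilon$ corrections to the tractor factors in the second term cancel identically, because the relevant combination collapses to $\Upsilon_k(X_AX_B-X_AX_B)=0$. Conformal invariance therefore reduces to an algebraic transformation identity of the form
\begin{align*}
(\widehat{\delta U})^k{}_{ij}-(\delta U)^k{}_{ij} \;=\; -(n-3)\,U_{ij}{}^{k\ell}\,\Upsilon_\ell,
\end{align*}
after which the coefficient $-\tfrac{1}{n-3}$ in \eqref{zdef} forces exact cancellation.

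The main work, and the principal obstacle, is establishing this divergence transformation law. Using $\widehat\Gamma^c_{ab}-\Gamma^c_{ab}=\delta^c_a\Upsilon_b+\delta^c_b\Upsilon_a-g_{ab}\Upsilon^c$, each Christoffel correction in $\widehat\nabla^m\widehat U_{mkij}$ produces an algebraic $\Upsilon$-contraction with $U$. The contractions of trace type collapse by the trace-freeness of $U$; those giving $\Upsilon^m U_{mkij}$-type terms combine using the $[mk]$-antisymmetry; and the first Bianchi identity is used to dispose of one further family. The net surviving contribution is $-(n-3)U_{ij}{}^{k\ell}\Upsilon_\ell$, with the coefficient $(n-3)$ arising as the combinatorial count left over after these reductions, which is exactly what fixes the factor $\tfrac{1}{n-3}$ in the statement. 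As a consistency check, when $U=W$ is the Weyl tensor of $g$, the contracted Bianchi identity \eqref{bianchi} gives $\tfrac{1}{n-3}(\delta W)=C$, so $z(W)$ reduces to the tractor curvature \eqref{trc2}, whose conformal invariance is built into the tractor connection.
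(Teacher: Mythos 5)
Your direct verification is correct and is precisely the first route the paper indicates (``easily verified directly using \eqref{XYZtrans} and the conformal transformation of the Levi-Civita connection'') but does not carry out: the invariance of the combination $X_AZ_{kB}-Z_{kA}X_B$, the residue $U_{ij}{}^{k\ell}\Upsilon_k(X_AZ_{\ell B}-Z_{\ell A}X_B)$ produced by the $Z_{kA}Z_{\ell B}$ term, and the transformation law $(\widehat{\delta U})^k{}_{ij}-(\delta U)^k{}_{ij}=-(n-3)\,U_{ij}{}^{k\ell}\Upsilon_\ell$ (obtained, as you describe, from trace-freeness, the index antisymmetries, and the algebraic Bianchi identity) are all right, and the factor $(n-3)$ emerges exactly as needed to cancel the residue. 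Your closing consistency check via $U=W$, \eqref{bianchi}, and the tractor curvature \eqref{trc2} is the paper's second suggested justification for the same lemma.
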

The conformal invariance is easily verified directly using
\eqref{XYZtrans} and the conformal transformation of the Levi-Civita
connection. It can also be deduced from the conformal invariance of
the tractor curvature $\kappa_{abCD}$. In fact the map
\eqref{split-map} is a standard ``BGG-splitting operator'', as in the
theory \cite{CSSannals,CD}, and the conformal deformation
sequence can de understood as arising from a twisting by
$\Lambda^2\cT$ of the de Rham complex \cite{Cap-def,GP-def}.

We are, in particular, interested in the case of dimension $n=4$. Then
it is evident, from the formula \eqref{zdef}, that if $U$ is SD (or ASD)
then so is $z(U)$ as a tractor-twisted 2-form, as the Hodge-$\star$
commutes with the Levi-Civita connection. We obtain more if also $U \in \ker \mathcal{D}^{*}$.

\begin{proposition} \label{BochnerProp}  Suppose $(M^4,g)$ is ASD, and $U$ is SD. Then  $U \in \ker \mathcal{D}^{*}$ iff $z=z(U) \in \Lambda^2_{+}(\mathcal{E})$ is a harmonic, self-dual two form.
\end{proposition}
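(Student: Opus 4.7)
The plan is to separate the proposition into two essentially independent parts: self-duality of $z$ in its form indices, and equivalence of twisted-harmonicity of $z$ with $\mathcal{D}^*_g U = 0$. The first is almost a triviality. As noted in the paragraph preceding the proposition, the Hodge $\star$ acts only on the form indices $(i,j)$ of $z_{ijAB}$ and commutes with the Levi-Civita connection in dimension four. Inspecting \eqref{zdef}, the first piece $U_{ij}{}^{k\ell}Z_{kA}Z_{\ell B}$ is SD in $(i,j)$ because $U$ is, while $(\delta U)^k{}_{ij}$ inherits SD in $(i,j)$ from the SD of $U$ in its last pair. Hence $z \in \Gamma(\Lambda^2_+\otimes\Lambda^2\mathcal{T})$ automatically, and only the ``harmonic'' direction requires work.

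For the harmonic equivalence I would first reduce harmonicity to co-closedness: any self-dual 2-form $\alpha$ on an oriented 4-manifold satisfies $d\alpha = 0 \Leftrightarrow \delta\alpha = 0$ via $\delta = -\star d\star$ and $\star\alpha = \alpha$, and this survives twisting by $\Lambda^2\mathcal{T}$ since the tractor connection acts trivially on form indices. Consequently $z$ is harmonic iff $\nabla^i z_{ijAB} = 0$. To compute $\nabla^i z_{ijAB}$ I would apply the Leibniz rule to \eqref{zdef}, inserting the tractor-connection formulas
\begin{equation*}
\nabla^\mathcal{T}_a X_B = Z_{aB},\qquad \nabla^\mathcal{T}_a Z_{cB} = -g_{ac}Y_B - P_{ac}X_B,\qquad \nabla^\mathcal{T}_a Y_B = P_a{}^b Z_{bB},
\end{equation*}
read directly from \eqref{tr-conn}, and then organise the output according to the splitting $\Lambda^2\mathcal{T} = (X\wedge Y)\oplus (X\wedge Z)\oplus (Y\wedge Z)\oplus (Z\wedge Z)$.

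Many terms vanish for purely algebraic reasons: trace-freeness of $U\in \mathcal{W}^+$ eliminates $U_{kj}{}^{k\ell}$ and $(\delta U)^i{}_{ij}$; the first Bianchi identity removes further contractions; and the combination of self-duality of $U$ with $W^+ = 0$ kills the Weyl--$U$ contractions produced on commuting covariant derivatives. The one slot with genuine second-derivative content comes when $\nabla^i$ falls on the $(\delta U)^k{}_{ij}$ factor, producing $\nabla^i\nabla^m U_{mkij}$. Commuting the two derivatives via \eqref{riemdef} and \eqref{riemdec}, and absorbing the commutator terms using $W^+ = 0$ together with the symmetries of $U$, this double divergence rearranges into
\[
\nabla^k\nabla^\ell U_{ikj\ell} + P^{k\ell}U_{ikj\ell},
\]
which is, up to the fixed nonzero constant in \eqref{Dstar}, the tensor $\tfrac{1}{2}\mathcal{D}^*U_{ij}$. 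The remaining slots of $\nabla^i z_{ijAB}$ either vanish identically, or reduce to first-order multiples of the same quantity, so $\nabla^i z_{ijAB} = 0$ if and only if $\mathcal{D}^*U = 0$.

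The main obstacle is the bookkeeping in this last step: one must track precisely how each of the four slots contributes, verify that the first-order $\delta U$ terms appearing in the $Z\wedge Z$ and $Y\wedge Z$ slots are consistent with (rather than additional to) the $\mathcal{D}^*U = 0$ condition, and invoke $W^+ = 0$ at exactly the right moment to convert the commutator of $\nabla^i$ and $\nabla^m$ into the Schouten contraction appearing in \eqref{Dstar}. Without the ASD hypothesis this conversion would leave a genuine $W^+\cdot U$ obstruction, so the argument is sharply adapted to the anti-self-dual setting.
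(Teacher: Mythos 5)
Your overall strategy---reduce harmonicity of the self-dual form $z$ to $\delta z=0$, compute $\nabla^i z_{ijAB}$ via the Leibniz rule and the tractor connection identities \eqref{DTable}, sort by slot, and kill the unwanted slots using trace-freeness and the algebraic Bianchi identity for $\delta U$---is precisely the paper's second proof (Proposition \ref{DzProp} contracted with $g^{im}$, as in Corollary \ref{dzUCor}). But the last step, where you claim the Schouten contraction $P^{k\ell}U_{ikj\ell}$ of \eqref{Dstar} arises ``by commuting the two derivatives via \eqref{riemdef} and \eqref{riemdec} and absorbing the commutator terms using $W^+=0$,'' misattributes the source of that term. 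Inspect the $X\wedge Z$ slot of \eqref{Dz}: its coefficient is $-\nabla_m(\delta U)^{\ell}{}_{ij}-U_{ij}{}^{k\ell}P_{mk}$, and the $P$ contraction appears \emph{already at first order}, produced when $\nabla^{\mathcal{T}}_m$ hits $Z_{kA}Z_{\ell B}$ via $\nabla_m Z_{kA}=-P_{mk}X_A-g_{mk}Y_A$. No second covariant derivative is commuted, and no $W^+\cdot U$ term ever appears; after contracting $g^{im}$, one recognizes $-\nabla^i(\delta U)^{\ell}{}_{ij}-P^i{}_k U_{ij}{}^{k\ell}$ directly as a constant multiple of $(\mathcal{D}^*U)_{j}{}^{\ell}$, using only the algebraic symmetries of $U$. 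In particular the identity $(\delta z)_{jAB}=-\tfrac12 (\mathcal{D}^*U)_{j\ell}\bigl(X_A Z_{\ell B}-Z_{\ell A}X_B\bigr)$ of \eqref{delzz2} is valid without the ASD hypothesis on $g$; $W^+=0$ is genuinely used later (e.g. Lemma \ref{deldelLemma}, which concerns the double divergence $\nabla^m(\delta U)_{mij}$ appearing in the $X\wedge Y$ slot of $\Delta z$), but it is not what makes $\delta z$ collapse to $\mathcal{D}^*U$.

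So the concrete gap is a conflation of two different computations. For $\delta z$, the $X\wedge Y$ slot $g^{im}(\delta U)_{mij}$ dies by trace-freeness alone (not via the Weyl contraction of Lemma \ref{deldelLemma}), the $Y\wedge Z$ slot dies by trace-freeness, the $Z\wedge Z$ slot dies by the Bianchi identity for $\delta U$, and the $X\wedge Z$ slot is identically $\mathcal{D}^*U$ with the $P$ term coming from the tractor connection---there is no commutator, and hence no ``$W^+\cdot U$ obstruction'' to absorb. If you carry out the Leibniz expansion in full (i.e. actually use \eqref{Dxz} and \eqref{Dzz}), the $P$ term is already present and your commuting step would instead introduce spurious new terms. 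It is worth noting that the paper's \emph{first} proof of Proposition \ref{BochnerProp}, via abstract BGG/conformal-invariance considerations, does invoke ASD and SD to rule out a hypothetical rank-two curvature contraction in the bottom slot; but that is a conservative argument about what the slot \emph{could} contain, and the explicit computation shows the term simply isn't there for any metric.
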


\begin{proof}
  First note that by standard BGG theory
  \cite{CSSannals,CD,ES-semi} this is true in the conformally flat
  setting.   Now consider the curved case.

  Certainly $\delta z$ is conformally invariant as it is a twisting of
  the usual divergence of 2-forms (in dimension 4) with the
  conformally invariant tractor connection. Thus, starting from the
  top (meaning from the left in the filtration on the adjoint tractor
  bundle that is induced from \eqref{filt}), the first non-zero slot
  of $\delta z$ must be conformally invariant and constructed from
  $U$, its covariant derivatives and possibly curvature contracted
  with $U$. From order considerations the last of these can only
  happen in the bottom slot. The very top slot is rank 2 and involves
  no derivatives of $U$. Thus it is zero, as $U$ is trace-free. At the
  next level we have one derivative of $U$, but it is well known that
  in dimension 4 there is no such conformal invariant.

  Thus the image of $\delta z$ lies in the bottom slot which contains
  a rank 2 tensor. Considering the knowledge of conformally flat case,
  it follows that this conformally invariant object must be a multiple
  of $\cD^* U$, plus possibly another conformally invariant rank 2
  tensor constructed by contracting curvature into $U$. It is easily
  checked that it is not possible to construct a rank 2 conformal
  invariant by contracting curvature into $U$, as $U$ is SD while the
  Weyl curvature is, by assumption, ASD.  Thus the only possibility is
  that the bottom slot is a non-zero multiple of $\cD^* U$.
    \end{proof}
We will give another proof of this result (by direct calculation) in
the next section; see Corollary \ref{dzUCor}.

\begin{corollary}  \label{WFCor} (See \cite{BL}, Theorem 3.10; \cite{GKS}, Lemma 2.1) Suppose $(M^4,g)$ is ASD and $U \in \ker \mathcal{D}^{*}$.  Let $z$ be defined as in (\ref{zdef}).  Then
\begin{align} \label{WF}
\Delta z = \frac{1}{3} R z = 2 J z.
\end{align}
\end{corollary}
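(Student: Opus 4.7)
The plan is to derive \eqref{WF} by applying the Weitzenb\"ock formula to $z$, viewed as a harmonic self-dual two-form on $M^4$ with values in the adjoint tractor bundle $\Lambda^2\cT$. Proposition \ref{BochnerProp} supplies the harmonicity: $\delta z = 0$ and, since $z$ is self-dual, also $dz = 0$, where the exterior derivative and codifferential are twisted by the tractor connection $\nabla^\cT$.

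First I would expand $0 = \Delta_H z = d d^* z + d^* d z$ by commuting the tractor-twisted covariant derivatives. This produces a rough-Laplacian term $\nabla^{*}\nabla z$, an action of the Riemannian curvature of $g$ on the two form-indices of $z$, and an action of the tractor curvature $\kappa_{abCD}$ on the $\Lambda^2\cT$-values of $z$. The Riemannian piece is identical to that arising in the untwisted Weitzenb\"ock formula \eqref{WFHodge} for self-dual two-forms and contributes $2W^+(z) - \tfrac{1}{3}Rz$, which collapses to $-\tfrac{1}{3}Rz$ under the ASD assumption $W^+\equiv 0$.

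The essential step is to show that the tractor-curvature contribution vanishes. From \eqref{trc2}, $\kappa_{abCD}$ splits into a Weyl piece $W_{abcd}Z_C{}^c Z_D{}^d$ and a Cotton piece supported in the $X\wedge Z$ slots. For the Weyl contribution, the endomorphism action reduces, after the contractions and antisymmetrisations built into the Weitzenb\"ock formula, to a commutator between $W^-$ and the self-dual tensor $U$ sitting in the $Z\wedge Z$ slot of $z$. Because $\mathfrak{so}(4) = \mathfrak{so}(3)_+ \oplus \mathfrak{so}(3)_-$ splits as a direct sum of commuting ideals, self-dual and anti-self-dual two-forms commute as skew endomorphisms of $TM$, so this commutator vanishes pointwise. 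For the Cotton contribution one uses the slot structure \eqref{zdef} of $z$, in particular that its $Y$-component is absent, together with $W=W^-$ and the Bianchi identity \eqref{bianchi} in the form $C_{abc}=\nabla^d W_{dabc}$, to rule out a surviving term by direct slot-by-slot inspection.

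Combining these steps yields $\Delta z - \tfrac{1}{3}Rz = 0$, the first equality of \eqref{WF}. The second equality $\tfrac{1}{3}R z = 2Jz$ is simply the algebraic identity $J=\tfrac{R}{2(n-1)}$ evaluated in dimension four. I expect the main obstacle to be the explicit verification that the Cotton contribution to the twist curvature gives no net term: unlike the Weyl piece, whose cancellation follows from the algebraic identity $[\Lambda^2_+,\Lambda^2_-]=0$ in four dimensions, the Cotton terms require careful tracking through the tractor splitting and use of the specific slot content of $z$ to see that every potential contribution is eliminated.
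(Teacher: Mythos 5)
Your proposal is correct and takes essentially the paper's route: the paper's proof is a one‑line appeal to the twisted Weitzenb\"ock formula for self‑dual bundle‑valued two‑forms (citing Bourguignon--Lawson and GKS), under which the ASD hypothesis leaves $\tfrac{1}{3}R=2J$ as the only surviving curvature term. The one simplification you are missing is that the Cotton slot of $\kappa_{abCD}$ needs no separate slot‑by‑slot inspection: by \eqref{bianchi} one has $C_{cab}=\nabla^d W_{dcab}$ in dimension four, which is anti‑self‑dual in the form indices $ab$ because $W=W^{-}$, so the entire tractor curvature is an ASD two‑form and the whole twist contribution dies by the same $[\Lambda^2_{+},\Lambda^2_{-}]=0$ argument you already used for the Weyl slot.
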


\begin{proof} Since $z$ satisfies a twisted version of the Weitzenb\"ock formula (\ref{WFHodge}), when $(M^4,g)$ is ASD the only non-zero curvature term is given by $\frac{1}{3}R$ in both versions. \end{proof}

\section{Weitzenb\"ock formula(s)} \label{WFSec}

In this section we use Proposition \ref{BochnerProp} and Corollary \ref{WFCor} to prove a Weitzenb\"ock formula for $z=z(U) \in \Lambda^2_{+}(\mathcal{E})$ when $U \in \ker \mathcal{D}^{*}$.   We begin with some more general calculations.

\begin{proposition} \label{DzProp} Let $U \in \Gamma(\mathcal{W}^{+})$ and let $z = z(U)$ be given by (\ref{zdef}).   Then the covariant derivative of $z$ is given by
\begin{align} \label{Dz} \begin{split}
 \nabla_m &z_{ij AB} = \left(\delta U\right)_{mij} \left( X_A Y_B - Y_A X_B \right) + \left( - \nabla_m \left( \delta U\right)^{\ell}_{\ ij} - U_{ij}^{\ \ k  \ell} P_{m k}\right) \left( X_A Z_{\ell B} - Z_{\ell A} X_B \right) \\
& \ \ \ \ + U_{ij \ m}^{\ \ k } \left( Y_A Z_{k B} - Z_{k A} Y_B \right) + \left( \frac{1}{2} \nabla_m U_{ij}^{\ \ k\ell} + \left( \delta U \right)^k_{\ ij} \, \delta^{\ell}_{m} \right) \left( Z_{kA} Z_{\ell B} - Z_{k B} Z_{\ell A} \right).
 \end{split}
 \end{align}

 \end{proposition}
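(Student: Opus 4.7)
The plan is a direct computation using the Leibniz rule applied to the defining formula (\ref{zdef}), together with the action of the conformal tractor connection on the algebraic splitting operators. First, from the tractor connection formula (\ref{tr-conn}), by applying $\nabla_m^\cT$ to the invariant tractors whose triples (for a fixed $g \in \cc$) are $(0,0,1)$, $(1,0,0)$, and $(0,\mu_a,0)$, and matching against the splitting $V_A = \sigma Y_A + \mu_a Z^a{}_A + \rho X_A$, I would extract the identities
\[
\nabla_m X_A = Z_{mA}, \qquad \nabla_m Y_A = P_{ma} Z^a{}_A, \qquad \nabla_m Z_{kA} = -g_{mk} Y_A - P_{mk} X_A.
\]
These are the only inputs needed beyond Leibniz.

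Second, I would differentiate $z_{ijAB}$ piece by piece, substitute the above formulas into $\nabla_m(Z_{kA} Z_{\ell B})$, $\nabla_m(X_A Z_{kB})$, etc., and then group the terms according to the four antisymmetric-in-$(A,B)$ basis elements
\[
X_A Y_B - Y_A X_B, \ X_A Z_{\ell B} - Z_{\ell A} X_B, \ Y_A Z_{k B} - Z_{k A} Y_B, \ Z_{kA} Z_{\ell B} - Z_{kB} Z_{\ell A}.
\]
The first piece $\tfrac12 U_{ij}{}^{k\ell}(Z_{kA} Z_{\ell B} - Z_{\ell A} Z_{kB})$ contributes a $\tfrac12 \nabla_m U$ term in the bottom slot and, via $\nabla Z$, $U\cdot g$-terms in the $Y\cdot Z$-slot and $U\cdot P$-terms in the $X\cdot Z$-slot. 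The second piece $-(\delta U)^k{}_{ij}(X_A Z_{kB} - Z_{kA} X_B)$ contributes a $-\nabla_m(\delta U)$ term in the $X\cdot Z$-slot, a $(\delta U) g_{mk}$-term in the top slot from $\nabla X$, a residual $(\delta U)$-term in the bottom slot from $\nabla Z$, and a pair of $P\cdot (\delta U) X_A X_B$ terms that cancel with opposite signs.

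Finally, using the antisymmetry of $U$ in $k,\ell$ together with the identity $g_{mk}(\delta U)^k{}_{ij} = (\delta U)_{mij}$, the two $Y\cdot Z$-contributions combine into the coefficient $U_{ij}{}^k{}_m$ asserted in (\ref{Dz}), the top slot acquires its claimed $(\delta U)_{mij}$-coefficient, and the residual $Z\cdot Z$-term is rewritten as $(\delta U)^k{}_{ij}\delta^\ell_m$ and added to $\tfrac12 \nabla_m U_{ij}{}^{k\ell}$ to produce the bottom-slot coefficient. The main obstacle is not conceptual but purely combinatorial: one must track signs and index positions carefully and exploit the Weyl-type symmetries of $U$ to collapse the many intermediate terms into the compact form of (\ref{Dz}).
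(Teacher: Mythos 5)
Your proposal is correct and takes essentially the same approach as the paper: derive the derivative formulas $\nabla_m X_A$, $\nabla_m Y_A$, $\nabla_m Z_{kA}$ for the tractor projectors (the paper cites these from \cite{GP} as display (\ref{DTable})), then apply Leibniz to the two terms of (\ref{zdef}) and regroup by the four skew basis elements. The only organizational difference is that the paper packages the intermediate step as Lemma \ref{DBLemma} (derivatives of the four antisymmetric products), whereas you substitute directly; the cancellations and index relabelings you identify (the $P\,(\delta U)X_AX_B$ cancellation, combining the two $Y\cdot Z$ contributions via antisymmetry of $U$ in $k,\ell$, and absorbing the residual $Z\cdot Z$ term as $(\delta U)^k{}_{ij}\delta^\ell_m$) are exactly the ones that occur in the paper's computation.
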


 For the proof of this and the following proposition we will use the following formulas (see (6) from \cite{GP}):
 \begin{align} \label{DTable} \begin{split}
 \nabla_k X_A &= Z_{kA}, \\
 \nabla_k Z_{A\ell} &= - P_{k\ell} X_A - Y_A g_{k\ell}, \\
 \nabla_k Y_A &= P_{k\ell} Z_A^{\ell}.
 \end{split}
 \end{align}

 \begin{lemma} \label{DBLemma} The following formulas hold:
 \begin{align} \label{Dxy}
 \nabla_m \left( X_A Y_B - Y_A X_B \right) =  - \left( Y_A Z_{mB} - Z_{mA} Y_B \right) + \left(  P_m^q X_A  Z_{qA}   - P_m^q Z_{qA}  X_B \right),
 \end{align}
 \begin{align} \label{Dxz}
 \nabla_m \left( X_A Z_{kB} - Z_{kA} Z_B \right) = - g_{km} \left( X_A Y_B - Y_A X_B \right) - \left( Z_{kA} Z_{mB} - Z_{mA} Z_{kB}  \right), \end{align}
 \begin{align} \label{Dyz}
 \nabla_m \left( Y_A Z_{kB} - Z_{kA} Y_B \right) =  P_{mk} \left( X_A Y_B - Y_A X_B \right) + P_m^q \left( Z_{qA}  Z_{kB} -  Z_{qB} Z_{kA} \right),
 \end{align}
 \begin{align} \label{Dzz} \begin{split}
 \nabla_m \left( Z_{kA} Z_{\ell B} - Z_{k B} Z_{\ell A} \right) &= - P_{mk} \left( X_A Z_{\ell B} - Z_{\ell A} X_B \right) + P_{m\ell} \left( X_A Z_{kB} - Z_{kA} X_B \right) \\
& \ \ - g_{mk} \left( Y_A Z_{\ell B} - Y_B Z_{\ell A} \right) + g_{m \ell} \left( Y_A Z_{kB} - Z_{kA} Y_B \right).
 \end{split}
 \end{align}
 \end{lemma}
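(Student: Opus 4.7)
The plan is to verify each of the four identities by direct computation: apply the Leibniz rule, substitute the three formulas listed in \eqref{DTable} for $\nabla_m X_A$, $\nabla_m Z_{A\ell}$, $\nabla_m Y_A$, and then regroup terms into the antisymmetric pairs (in $A,B$) that appear on the right-hand side. Each of the four expressions being differentiated is antisymmetric in $A \leftrightarrow B$, and this antisymmetry will be used systematically to cancel or combine terms.

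For \eqref{Dxy}, expand $\nabla_m(X_A Y_B - Y_A X_B)$ using $\nabla_m X_A = Z_{mA}$ and $\nabla_m Y_A = P_{mq} Z_A{}^{q}$. This produces four terms that group directly into the two antisymmetric pairs on the right. For \eqref{Dxz}, expand $\nabla_m(X_A Z_{kB} - Z_{kA} X_B)$ using additionally $\nabla_m Z_{kA} = -P_{mk} X_A - Y_A g_{mk}$; the two $P_{mk} X_A X_B$ contributions cancel against each other (they are symmetric in $A,B$ while the parent expression is skew), leaving exactly the $Z_{mA} Z_{kB} - Z_{kA} Z_{mB}$ term and the $-g_{mk}(X_A Y_B - Y_A X_B)$ term.

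For \eqref{Dyz}, differentiate $Y_A Z_{kB} - Z_{kA} Y_B$ and regroup: the terms involving $P_{mk}$ combine into the $X,Y$-pair $P_{mk}(X_A Y_B - Y_A X_B)$, while the Schouten contributions from $\nabla_m Y$ hitting $Z_k$ yield $P_m{}^q(Z_{qA} Z_{kB} - Z_{qB} Z_{kA})$; the $Y_A Y_B g_{mk}$-type terms vanish by skew-symmetry. Finally, for \eqref{Dzz}, applying $\nabla_m Z_{kA} = -P_{mk} X_A - Y_A g_{mk}$ twice to $Z_{kA} Z_{\ell B} - Z_{kB} Z_{\ell A}$ produces eight terms: four with Schouten factors that assemble into the $-P_{mk}$ and $+P_{m\ell}$ skew pairs with $X,Z$, and four with metric factors that assemble into the $g_{mk}$ and $g_{m\ell}$ skew pairs with $Y,Z$.

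The only real obstacle is careful bookkeeping — ensuring that indices are contracted consistently and that the antisymmetrization in $A,B$ is tracked at each step. No new geometric input beyond \eqref{DTable} is required; everything is algebraic once the product rule is applied. These formulas will then feed directly into the proof of Proposition \ref{DzProp}, where they are applied termwise to the four pieces of $z$ given in \eqref{zdef}.
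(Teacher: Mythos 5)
Your proposal is correct and follows essentially the same route as the paper: Leibniz rule, substitute the three formulas from \eqref{DTable}, and regroup into the skew pairs in $A,B$, using the cancellation of the symmetric-in-$A,B$ terms (the $P_{mk}X_AX_B$ terms in \eqref{Dxz} and the $g_{mk}Y_AY_B$ terms in \eqref{Dyz}) exactly as you describe.
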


 \begin{proof}    First, by the Leibniz rule,
 \begin{align} \label{DB1}
  \nabla_m \left( X_A Y_B - Y_A X_B \right) =  \left( \nabla_m X_A \right) Y_B + X_A \left( \nabla_m Y_B \right) - \left( \nabla_m Y_A \right) X_B - Y_A \left( \nabla_m X_B \right).
  \end{align}
  By (\ref{DTable}) we find
 \begin{align} \label{DB2} \begin{split}
  \nabla_m \left( X_A Y_B - Y_A X_B \right) &=  Z_{mA} Y_B + P_{mq} X_A  Z_{qA}   - P_{mq} Z_{qA}  X_B - Y_A Z_{mB} \\
  &= - \left( Y_A Z_{mB} - Z_{mA} Y_B \right) + \left(  P_{mq} X_A  Z_{qA}   - P_{mq} Z_{qA}  X_B \right).
  \end{split}
  \end{align}
Similarly,
\begin{align} \label{DB3} \begin{split}
\nabla_m & \left( X_A Z_{kB} - Z_{kA} X_B \right) = \left( \nabla_m X_A \right) Z_{kB} + X_A  \left( \nabla_m Z_{kB} \right) - \left( \nabla_m Z_{kA} \right) X_B - Z_{kA} \left( \nabla_m X_B \right) \\
&= Z_{mA} Z_{kB} + X_A  \left( - P_{mk} X_B - Y_B g_{mk} \right) - \left( - P_{mk} X_A - Y_A g_{mk} \right) X_B - Z_{kA} Z_{mB}  \\
&= - g_{km} \left( X_A Y_B - Y_A X_B \right) - \left( Z_{kA} Z_{mB} - Z_{mA} Z_{kB}  \right),
\end{split}
\end{align}
\begin{align} \label{DB4} \begin{split}
\nabla_m & \left( Y_A Z_{kB} - Z_{kA} Y_B \right) = \left( \nabla_m Y_A \right) Z_{kB} + Y_A \left( \nabla_m Z_{kB} \right) - \left( \nabla_m Z_{kA} \right) Y_B - Z_{kA} \left( \nabla_m Y_B \right) \\
&= P_{mq} Z_{qA}  Z_{kB} + Y_A \left( - P_{mk} X_B - Y_B g_{mk}  \right) - \left( - P_{mk} X_A - Y_A g_{mk} \right) Y_B -  P_{mq} Z_{qB} Z_{kA} \\
&= P_{mk} \left( X_A Y_B - Y_A X_B \right) + P_{mq} \left( Z_{qA}  Z_{kB} -  Z_{qB} Z_{kA} \right),
\end{split}
\end{align}
\begin{align} \label{DB5} \begin{split}
\nabla_m \left( Z_{kA} Z_{\ell B}  - Z_{k B} Z_{\ell A} \right) &= \left( \nabla_m Z_{kA} \right) Z_{\ell B} + Z_{kA} \left( \nabla_m Z_{\ell B} \right) - \left( \nabla_m Z_{k B} \right) Z_{\ell A} - Z_{kB} \left( \nabla_m Z_{\ell A} \right) \\
&= \left( - P_{mk} X_A - Y_A g_{mk} \right)  Z_{\ell B} + Z_{kA} \left( - P_{m\ell} X_B - Y_B g_{m \ell} \right) \\
& \ \ \ \   - \left( - P_{mk} X_B - Y_B \delta_{mk} \right) Z_{\ell A} - Z_{kB} \left( P_{m\ell} X_A - Y_A \delta_{m \ell} \right) \\
&= - P_{mk} \left( X_A Z_{\ell B} - Z_{\ell A} X_B \right) + P_{m\ell} \left( X_A Z_{kB} - Z_{kA} X_B \right) \\
& \ \ - g_{mk} \left( Y_A Z_{\ell B} - Y_B Z_{\ell A} \right) + g_{m \ell} \left( Y_A Z_{kB} - Z_{kA} Y_B \right).
 \end{split}
 \end{align}

 \end{proof}

\begin{proof}[Proof of Proposition \ref{DzProp}]  By (\ref{zdef}),
\begin{align} \label{Dzp1} \begin{split}
\nabla_m z_{ijAB} &= \nabla_m \Big\{   \frac{1}{2}  U_{ij}^{\ \ k\ell} \left( Z_{kA} Z_{\ell B} - Z_{\ell A} Z_{k B} \right) - (\delta U)^k_{\ ij} \left( X_A Z_{kB} - Z_{kA} X_B \right) \Big\}  \\
&= \frac{1}{2} \nabla_m  U_{ij}^{\ \ k\ell} \left( Z_{kA} Z_{\ell B} - Z_{\ell A} Z_{k B} \right) + \frac{1}{2} U_{ij}^{\ \ k\ell} \nabla_m \left( Z_{kA} Z_{\ell B} - Z_{\ell A} Z_{k B} \right) \\
& \ \ - \nabla_m \left( \delta U\right)^k_{\ ij}  \left( X_A Z_{kB} - Z_{kA} X_B \right) - \left(\delta U\right)^k_{\ ij} \nabla_m   \left( X_A Z_{kB} - Z_{kA} X_B \right) \\
&= \frac{1}{2} \nabla_m  U_{ij}^{\ \ k\ell} \left( Z_{kA} Z_{\ell B} - Z_{\ell A} Z_{k B} \right) + \frac{1}{2} U_{ij}^{\ \ k\ell} \Big\{ - P_{mk} \left( X_A Z_{\ell B} - Z_{\ell A} X_B \right) \\
& \ \  + P_{m\ell} \left( X_A Z_{kB} - Z_{kA} X_B \right) - g_{mk} \left( Y_A Z_{\ell B} - Y_B Z_{\ell A} \right) + g_{m \ell} \left( Y_A Z_{kB} - Z_{kA} Y_B \right) \Big\} \\
& \ \ \ \ - \nabla_m \left( \delta U\right)^k_{\ ij}  \left( X_A Z_{kB} - Z_{kA} X_B \right) - \left(\delta U\right)^k_{\ ij} \Big\{  - g_{km} \left( X_A Y_B - Y_A X_B \right) \\
& \ \ \ \ \ \ \  - \left( Z_{kA} Z_{mB} - Z_{mA} Z_{kB}  \right) \Big\} \\
&= \left(\delta U\right)_{mij} \left( X_A Y_B - Y_A X_B \right) + \left( - \nabla_m \left( \delta U\right)^{\ell}_{\ ij} - U_{ij}^{\ \ k \ell} P_{m k}\right) \left( X_A Z_{\ell B} - Z_{\ell A} X_B \right) \\
& \ \ \ \ + U_{ij \ m}^{\ \ k} \left( Y_A Z_{k B} - Z_{k A} Y_B \right) + \left(  \frac{1}{2} \nabla_m U_{ij}^{\ \ k\ell} + \left( \delta U\right)^k_{\ ij} \, \delta^{\ell}_{m} \right) \left( Z_{kA} Z_{\ell B} - Z_{k B} Z_{\ell A} \right).
\end{split}
\end{align}

 \end{proof}

We can now give another proof of Proposition \ref{BochnerProp}:

\begin{corollary} \label{dzUCor}  Suppose $(M^4,g)$ is ASD, and $U$ is SD. Then  $U \in \ker \mathcal{D}^{*}$ iff $z=z(U) \in \Lambda^2_{+}(\mathcal{E})$ is a harmonic, self-dual two form.
\end{corollary}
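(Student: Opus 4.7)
The plan is to prove this corollary by direct calculation of $\delta z$ using the formula for $\nabla_m z_{ij AB}$ provided by Proposition \ref{DzProp}, and then to identify the result with $\mathcal{D}^{*} U$ up to a nonzero constant. Because $U \in \mathcal{W}^+$ is self-dual, the formula (\ref{zdef}) shows immediately that $z \in \Lambda^2_+(\mathcal{E})$ as a tractor-valued two-form. For a self-dual two-form on a four-manifold, Hodge harmonicity is equivalent to being co-closed (since $\delta = -\star d\, \star$ and $\star z = z$), so it suffices to prove the equivalence $\delta z = 0 \Longleftrightarrow \mathcal{D}^{*}U = 0$.

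Concretely, I would contract the four-term formula of Proposition \ref{DzProp} with $g^{mi}$ and analyze each of the four projector slots $X \wedge Y$, $X \wedge Z$, $Y \wedge Z$, $Z \wedge Z$ separately. The $X \wedge Y$ coefficient is $g^{mi} (\delta U)_{mij} = g^{mi}\nabla^p U_{pmij}$, which vanishes because it is a trace of the completely trace-free tensor $U$. Similarly the $Y \wedge Z$ coefficient is $g^{mi} U_{ij\ m}^{\ \ k}$, another single trace of $U$, which vanishes. For the $Z \wedge Z$ slot, the coefficient $\frac{1}{2}\nabla^i U_{ij}^{\ \ k\ell} + (\delta U)^{k \ell}_{\ \ j}$ contributes only through its antisymmetrization in $(k\ell)$, because of the algebraic structure $Z_{kA}Z_{\ell B} - Z_{kB} Z_{\ell A}$; this antisymmetrization should vanish after applying the algebraic Bianchi identity together with the pair-swap $U_{abcd} = U_{cdab}$. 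Finally, the $X \wedge Z$ coefficient is $-\nabla^i (\delta U)^{\ell}_{\ ij} - U_{ij}^{\ \ k\ell} P^{ik}$, and after expanding $(\delta U)^{\ell}_{\ ij}$, using Weyl symmetries to relabel indices, and commuting two covariant derivatives, this should reduce to a nonzero multiple of $\mathcal{D}^{*} U_{j \ell}$ as given by formula (\ref{Dstar}).

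The main obstacle is the $X \wedge Z$ computation. The commutator $[\nabla^i,\nabla^m]$ acting on $U$ produces contractions with the full Riemann curvature; via the decomposition (\ref{riemdec}) these split into Schouten pieces and Weyl pieces. The ASD hypothesis $W^{+}_{g} = 0$ together with the self-duality of $U$ should ensure that the Weyl contributions drop out (since $W^-$ cannot contract nontrivially with an element of $\mathcal{W}^+$), so that only the Schouten term $P^{k\ell} U_{ikj\ell}$ of $\mathcal{D}^{*} U$ survives on the curvature side. The second-order piece $\nabla^k \nabla^{\ell} U_{ikj\ell}$ of $\mathcal{D}^{*}U$ then emerges from $-\nabla^i (\delta U)^{\ell}_{\ ij}$ after a symmetry rearrangement. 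The coefficient-tracking is delicate, but the nonvanishing of the resulting scalar multiplier is guaranteed \emph{a priori} by the conformally flat case already handled via BGG theory in the first proof of Proposition \ref{BochnerProp}.
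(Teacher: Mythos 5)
Your overall plan is the same as the paper's: contract Proposition~\ref{DzProp} with $g^{im}$ and analyze the four projector slots, then identify the surviving $X\wedge Z$ slot with $\mathcal{D}^*U$ via~\eqref{Dstar}. Your treatment of the $X\wedge Y$ and $Y\wedge Z$ slots (killed by trace-freeness of $U$) and of the $Z\wedge Z$ slot (killed after antisymmetrization in $k,\ell$ using the algebraic Bianchi identity and the pair swap) is correct, and in fact more explicit than the paper's terse remark that ``$U$ is a curvature-type tensor for which all contractions vanish.'' Your reduction of harmonicity to co-closedness for a self-dual two-form is also fine.

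However, your account of the $X\wedge Z$ slot contains a genuine misunderstanding. You claim one must commute covariant derivatives, that this commutation generates Riemann curvature contractions with $U$, and that the ASD hypothesis $W^+=0$ is needed so that ``only the Schouten term $P^{k\ell}U_{ikj\ell}$ survives.'' None of this happens. After contracting with $g^{im}$ and lowering the free $\ell$, the $X\wedge Z$ coefficient is
\[
-\nabla^{i}\nabla^{q}U_{q\ell i j}\;-\;P^{mk}U_{m j k \ell}\,.
\]
The Schouten term is already sitting there explicitly — it came from the $-U_{ij}{}^{k\ell}P_{mk}$ entry of Proposition~\ref{DzProp}, which traces back to the $P_{ab}$ entries of the tractor connection~\eqref{tr-conn}, \emph{not} from any derivative commutator. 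For the two-derivative piece, one simply uses the pair-swap and the two antisymmetries of $U$ to rewrite $U_{q\ell ij}=U_{ji\ell q}$, then relabels $i\to a,\ q\to b$ to get $-\nabla^{a}\nabla^{b}U_{ja\ell b}$. The inner derivative still hits the fourth slot and the outer the second, exactly as in~\eqref{Dstar}, so no commutation ever occurs and no curvature terms arise. One reads off $\delta z_{jAB}=-\tfrac12(\mathcal{D}^*U)_{j\ell}\big(X_{A}Z_{\ell B}-Z_{\ell A}X_{B}\big)$ for \emph{any} $U\in\Gamma(\mathcal{W}^{+})$, with no use of the ASD hypothesis on $g$ anywhere in this slot (the SD hypothesis on $U$ is only used to make $z$ self-dual). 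A further caution: even if you had introduced a commutator, the cancellation mechanism you cite is unreliable — the assertion that ``$W^-$ cannot contract nontrivially with an element of $\mathcal{W}^+$'' is true only for double-pair contractions (where it is the composition of operators on $\Lambda^2_-\perp\Lambda^2_+$); single-index contractions of $W^{-}$ against a self-dual $U$ do not in general vanish, and these are precisely what appear in $[\nabla,\nabla]U$.
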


\begin{proof} Since $z$ is self-dual, it suffices to show that $\delta z = 0$.  By (\ref{Dz}),
\begin{align} \label{delzz} \begin{split}
\left(\delta z\right)_{j AB} &= g^{im} \nabla_m z_{ij AB} \\
&= g^{im} \left(\delta U\right)_{mij} \left( X_A Y_B - Y_A X_B \right) + g^{im}\left( - \nabla_m \left( \delta U\right)^{\ell}_{\ ij} - U_{ij}^{\ \ k  \ell} P_{m k}\right) \left( X_A Z_{\ell B} - Z_{\ell A} X_B \right) \\
& \ \ \ \ + g^{im} U_{ij \ m}^{\ \ k } \left( Y_A Z_{k B} - Z_{k A} Y_B \right) + g^{im}\left( \frac{1}{2} \nabla_m U_{ij}^{\ \ k\ell} + \left( \delta U \right)^k_{\ ij} \, \delta^{\ell}_{m} \right) \left( Z_{kA} Z_{\ell B} - Z_{k B} Z_{\ell A} \right).
 \end{split}
 \end{align}
Since $U$ is a curvature-type tensor for which all contractions vanish, it follows that
 \begin{align} \label{delzz2}
\left(\delta z\right)_{j AB} =  - \frac{1}{2} \left(\mathcal{D}^{*}U\right)_{j \ell} \left( X_A Z_{\ell B} - Z_{\ell A} X_B \right).
 \end{align}

\end{proof}

 \begin{proposition} \label{RzProp} Same assumptions as Proposition \ref{DzProp}.  Then the rough laplacian of $z$, i.e., $\Delta z_{ij AB} = g^{k \ell} \nabla_k \nabla_{\ell} z_{ij AB}$, is given by
 \begin{align} \label{Roughz} \begin{split}
 \Delta &z_{ij AB} = \Big\{  \left(W^{+}\right)_{pqi}^{\ \ \ k} U^{qp}_{\ \  k j} +  \left(W^{+}\right)_{pqj}^{\ \ \ k}U^{pq}_{\ \  ik} \Big\}   \left( X_A Y_B - Y_A X_B \right) \\
  &+ \Big\{- \Delta \left( \delta U\right)^{\ell}_{\ ij} - 2 \nabla^m U_{ij}^{\ \ k \ell} P_{m k} - U_{ij}^{\ \ k \ell} \nabla_k J + J \left( \delta U\right)^{\ell}_{\ ij} \Big\}  \left( X_A Z_{\ell B} - Z_{\ell A} X_B \right) \\
   &+ \Big\{ \frac{1}{2} \Delta U_{ij}^{\ \ k\ell} -  \nabla^{k} \left( \delta U \right)^{\ell}_{\ ij}  +  \nabla^{\ell} \left( \delta U \right)^k_{\ ij} - U_{ij}^{\ \ km} P^{\ell}_{m} + U_{ij}^{\ \  \ell m} P^{k}_m \Big\} \times \left( Z_{kA} Z_{\ell B} - Z_{k B} Z_{\ell A} \right).
 \end{split}
 \end{align}
 \end{proposition}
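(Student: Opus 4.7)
The plan is a direct computation: differentiate the formula \eqref{Dz} for $\nabla_m z_{ij AB}$ covariantly once more and collect terms by slot in $\Lambda^2 \mathcal{E}$. Precisely, I write $\Delta z_{ij AB} = g^{m\ell}\nabla_\ell \nabla_m z_{ij AB}$, substitute Proposition \ref{DzProp} for $\nabla_m z_{ij AB}$, and apply $\nabla^\ell$ term by term via Leibniz. Each of the four ``coefficient $\times$ projector'' pieces splits into $(\nabla^\ell\text{coeff})\cdot\text{projector} + \text{coeff}\cdot(\nabla^\ell\text{projector})$, and I substitute the four identities of Lemma \ref{DBLemma} for the derivatives of the tractor-projector combinations $A := X_{[A}Y_{B]}$, $B_k := X_{[A}Z_{B]k}$, $C_k := Y_{[A}Z_{B]k}$, and $D_{k\ell} := Z_{[A}{}^{k}Z_{B]}{}^{\ell}$. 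The result is a contribution to each of the four antisymmetric slots; everything else is mechanical.

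For the middle ($B_\ell$) slot the key ingredients are the contracted Bianchi identity $\nabla^a P_{ab} = \nabla_b J$ (which generates the $U_{ij}^{\ \ k\ell}\nabla_k J$ term), the trace identity $U_{ij}^{\ \ km} P_{mk} = 0$ (immediate from $P$ symmetric and $U$ Weyl-antisymmetric in its second pair, which kills Schouten cross-terms), and the observation that two identical $(\delta U)^m_{\ ij}P^\ell_{\ m}$ contributions appear with opposite signs (one from $(\delta U)_{\ell ij}\nabla^\ell A$ and one from $G_{\ell ij}^{ab}\nabla^\ell D_{ab}$) and cancel, leaving the doubled $P_{mk}\nabla^m U_{ij}^{\ \ k\ell}$ coefficient. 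For the bottom ($D_{k\ell}$) slot the relevant identities are the contracted divergences $\nabla_k U_{ij}^{\ \ k\ell} = (\delta U)^{\ell}_{\ ij}$ and $\nabla_\ell U_{ij}^{\ \ k\ell} = -(\delta U)^k_{\ ij}$ (immediate from Weyl symmetries of $U$) and antisymmetrisation in $(k,\ell)$; the combined contributions from $\nabla^\ell B_k$, $\nabla^\ell C_k$, and the $G_{\ell ij}^{ab}$-coefficient produce exactly the $\nabla^\ell(\delta U)^k_{\ ij} - \nabla^k(\delta U)^\ell_{\ ij}$ and $U_{ij}^{\ \ \ell m}P^k_{\ m} - U_{ij}^{\ \ km}P^\ell_{\ m}$ terms. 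In parallel one checks that the $C_k$-slot, which is absent from the statement, indeed vanishes: the three contributions (from $\nabla^\ell A$, from differentiating the $C$-coefficient $U_{ij}^{\ \ k}{}_m$, and from the $\delta^\ell_a C_b$ and $\delta^\ell_b C_a$ pieces of $\nabla^\ell D_{ab}$) sum to zero by a numerical identity.

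The subtle step is the top ($A$) slot. Its only contributions are $\nabla^\ell(\delta U)_{\ell ij}$ from differentiating the $A$-coefficient and $-F^a_{\ a ij} = \nabla^a(\delta U)^{\ }_{a ij}$ from $F_{\ell ij}^a \nabla^\ell B_a$ (where the $U_{ij}^{\ \ ca}P_{ac} = 0$ identity is used again), which add to $2\nabla^\ell(\delta U)_{\ell ij}$. Using the antisymmetry of $U$ in its first pair I rewrite
\begin{equation*}
2\nabla^\ell(\delta U)_{\ell ij} = [\nabla^\ell, \nabla^m]U_{m\ell ij},
\end{equation*}
and apply the Ricci identity, producing four Riemann contractions. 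The two contractions on the first pair of $U$ reduce, via $g^{mb}R_{abm}{}^p = -R_a{}^p$, to a pairing of the symmetric Ricci tensor against the $(m\ell)$-antisymmetric tensor $U$, and therefore vanish. For the two contractions on $(i,j)$, I decompose $R = W + (\text{Schouten})$: the Schouten pieces either contract a trace of $U$ (zero by tracelessness) or produce a pair of $P^{mp}U_{mipj}$-type terms that cancel between the $i$- and $j$-contributions after pair swap and the first Bianchi identity for $U$. What remains is the full Weyl contracted against $U$ on its first pair; because $U$ is self-dual in both pairs and $W^-$ is ASD in its first pair, only $W^+$ survives. Matching indices yields the stated $(W^+)_{pqi}^{\ \ \ k}U^{qp}_{\ \ kj} + (W^+)_{pqj}^{\ \ \ k}U^{pq}_{\ \ ik}$.

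The main obstacle is the bookkeeping for this top slot: carefully tracking the four Riemann contractions, separating Weyl from Schouten, verifying the Schouten cancellation using the first Bianchi and trace identities for $U$, and finally reducing from full $W$ to $W^+$ via the self-duality of $U$. The other slots are routine once Lemma \ref{DBLemma} and the divergence identities for $U$ are in hand.
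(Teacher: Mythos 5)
Your proposal is correct and follows essentially the same route as the paper: apply $\nabla^m$ to the Proposition~\ref{DzProp} formula via Leibniz, substitute Lemma~\ref{DBLemma} for the derivatives of the projector bivectors, organize by slot, and for the top slot pass from $2\nabla^m(\delta U)_{mij}$ to the $W^+$ expression via the Ricci identity, the Riemann decomposition, vanishing of the Ricci pieces, cancellation of the Schouten pieces, and the self-duality of $U$ — precisely the content of the paper's Lemma~\ref{deldelLemma}. The minor imprecisions (attributing the $U^{km}P_{mk}=0$ identity to the $B$-slot rather than the $A$- and $C$-slots, and not spelling out every Bianchi/re-indexing step in the Schouten cancellation) do not affect the correctness of the argument.
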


\begin{proof} By (\ref{Dz}),
 \begin{align} \label{Dlz1} \begin{split}
 \Delta z_{ij AB}  &= \nabla^m \left( \nabla_m z_{ij AB}\right) \\
  &= \nabla^m \Big\{ \left(\delta U\right)_{mij} \left( X_A Y_B - Y_A X_B \right) + \left( - \nabla_m \left( \delta U\right)^{\ell}_{\ ij} - U_{ij}^{\ \ k \ell} P_{m k}\right) \left( X_A Z_{\ell B} - Z_{\ell A} X_B \right) \\
& \ \ \ \ + U_{ij \ m}^{\ \ k} \left( Y_A Z_{k B} - Z_{k A} Y_B \right) + \left( \frac{1}{2} \nabla_m U_{ij}^{\ \ k\ell} + \left( \delta U \right)^k_{\ ij} \, \delta^{\ell}_{m} \right) \left( Z_{kA} Z_{\ell B} - Z_{k B} Z_{\ell A} \right) \Big\} \\
&= \left( \nabla^m \left(\delta U\right)_{mij}\right)  \left( X_A Y_B - Y_A X_B \right) +  \left( - \Delta \left( \delta U\right)^{\ell}_{\ ij} - \nabla^m U_{ij}^{\ \ k \ell} P_{m k} - U_{ij}^{\ \ k \ell} \nabla^m P_{m k}\right)  \\
& \times \, \left( X_A Z_{\ell B} - Z_{\ell A} X_B \right) + \left( \nabla^m U_{ij\ m}^{\ \ k}\right) \left( Y_A Z_{k B} - Z_{k A} Y_B \right) + \left( \frac{1}{2} \Delta U_{ij}^{\ \ k\ell} + \nabla^{\ell} \left( \delta U \right)^k_{\ ij}  \right) \\
& \times \,  \left( Z_{kA} Z_{\ell B} - Z_{k B} Z_{\ell A} \right)  + \left(\delta U\right)_{mij} \nabla^m \left( X_A Y_B - Y_A X_B \right)\\
&  + \left( - \nabla_m \left( \delta U\right)^{\ell}_{\ ij} - U_{ij}^{\ \ k \ell} P_{m k}\right) \nabla^m \left( X_A Z_{\ell B} - Z_{\ell A} X_B \right) \\
& \ \ + U_{ij \ m}^{\ \ k} \nabla^m \left( Y_A Z_{k B} - Z_{k A} Y_B \right) + \left( \frac{1}{2} \nabla_m U_{ij}^{\ \ k\ell} + \left( \delta U \right)^k_{\ ij} \, \delta^{\ell}_{m} \right) \nabla^m \left( Z_{kA} Z_{\ell B} - Z_{k B} Z_{\ell A} \right).
 \end{split}
 \end{align}
Then applying the formulas from Lemma \ref{DBLemma} and using the Bianchi identity to write $\nabla^m P_{mk} = \nabla_k J$, we get
\begin{align} \label{Dlz2} \begin{split}
 \Delta & z_{ij AB} = \left( \nabla^m \left(\delta U\right)_{mij}\right)  \left( X_A Y_B - Y_A X_B \right) +  \left( - \Delta \left( \delta U\right)^{\ell}_{\ ij} - \nabla^m U_{ij}^{\ \ k \ell} P_{m k} - U_{ij}^{\ \ k \ell} \nabla_k J\right)  \\
& \times  \left( X_A Z_{\ell B} - Z_{\ell A} X_B \right) + \left( \nabla^m U_{ij\ m}^{\ \ k}\right) \left( Y_A Z_{k B} - Z_{k A} Y_B \right) + \left( \frac{1}{2} \Delta U_{ij}^{\ \ k\ell} + \nabla^{\ell} \left( \delta U \right)^k_{\ ij}  \right) \\
& \times   \left( Z_{kA} Z_{\ell B} - Z_{k B} Z_{\ell A} \right)  + \left(\delta U\right)_{mij} \Big\{  - \left( Y_A Z^m_{B} - Z^m_{A} Y_B \right)+ \left(  P^m_q X_A  Z^q_{A}   - P^m_q Z^q_A  X_B \right) \Big\} \\
&  + \left( - \nabla_m \left( \delta U\right)^{\ell}_{\ ij} - U_{ij}^{\ \ k \ell} P_{m k}\right) \Big\{  - \delta_{\ell}^m  \left( X_A Y_B - Y_A X_B \right) - \left( Z_{\ell A} Z^m_{B} - Z^m_{A} Z_{\ell B}  \right) \Big\} \\
& \ \ + U_{ij\ m}^{\ \ k} \Big\{  P^m_{k} \left( X_A Y_B - Y_A X_B \right) + P^m_{q} \left( Z_{qA}  Z_{kB} -  Z_{qB} Z_{kA} \right) \Big\}\\
& + \left( \frac{1}{2} \nabla_m U_{ij}^{\ \ k\ell} + \left( \delta U \right)^k_{\ ij} \, \delta^{\ell}_{m} \right) \Big\{  - P^m_k \left( X_A Z_{\ell B} - Z_{\ell A} X_B \right) + P^m_{\ell} \left( X_A Z_{kB} - Z_{kA} X_B \right) \\
& \ \ - \delta^m_{k} \left( Y_A Z_{\ell B} - Y_B Z_{\ell A} \right) + \delta^m_{\ell} \left( Y_A Z_{kB} - Z_{kA} Y_B \right) \Big\} \\
&= 2 \left( \nabla^m \left(\delta U\right)_{mij}\right)  \left( X_A Y_B - Y_A X_B \right) + \Big\{- \Delta \left( \delta U\right)^{\ell}_{\ ij} - 2 \nabla^m U_{ij}^{\ \ k \ell} P_{m k} - U_{ij}^{\ \ k \ell} \nabla_k J\\
& \  + J \left( \delta U\right)^{\ell}_{\ ij} \Big\}  \left( X_A Z_{\ell B} - Z_{\ell A} X_B \right) + \Big\{ \frac{1}{2} \Delta U_{ij}^{\ \ k\ell} + 2 \nabla^{\ell} \left( \delta U \right)^k_{\ ij} + U_{ij}^{\ \ mk} P^{\ell}_{m} + U_{ij}^{\ \  \ell m} P^{k}_m \Big\}  \\
& \times \left( Z_{kA} Z_{\ell B} - Z_{k B} Z_{\ell A} \right).
 \end{split}
 \end{align}
 Since the very last term is skew-symmetric in $k, \ell$, we can rewrite this as
 \begin{align} \label{symkl} \begin{split}
 \Delta &z_{ij AB} = 2 \left( \nabla^m \left(\delta U\right)_{mij}\right)  \left( X_A Y_B - Y_A X_B \right) + \Big\{- \Delta \left( \delta U\right)^{\ell}_{\ ij} - 2 \nabla^m U_{ij}^{\ \ k \ell} P_{m k} - U_{ij}^{\ \ k \ell} \nabla_k J\\
& \  + J \left( \delta U\right)^{\ell}_{\ ij} \Big\}  \left( X_A Z_{\ell B} - Z_{\ell A} X_B \right) + \Big\{ \frac{1}{2} \Delta U_{ij}^{\ \ k\ell} -  \nabla^{k} \left( \delta U \right)^{\ell}_{\ ij}  +  \nabla^{\ell} \left( \delta U \right)^k_{\ ij} \\
& \ \  - U_{ij}^{\ \ km} P^{\ell}_{m} + U_{ij}^{\ \  \ell m} P^{k}_m \Big\} \times \left( Z_{kA} Z_{\ell B} - Z_{k B} Z_{\ell A} \right).
 \end{split}
 \end{align}
 The proposition will follow, once we prove

\begin{lemma} \label{deldelLemma}
\begin{align}  \label{dvf}
 \nabla^m \left(\delta U\right)_{mij}= \frac{1}{2}   \left(W^{+}\right)_{pqi}^{\ \ \ k} U^{qp}_{\ \  k j} - \frac{1}{2}  \left(W^{+}\right)_{pqj}^{\ \ \ k}U^{qp}_{\ \  ki}.
 \end{align}
 In particular, if $(M^4,g)$ is ASD, then
 \begin{align} \label{sksm}
 \nabla^m \left(\delta U\right)_{mij}= 0.
 \end{align}
 \end{lemma}

 \begin{proof}  By definition of the divergence and the fact that $U_{\ell m i j } = - U_{m \ell i j}$,
 \begin{align*}
 \nabla^m \left(\delta U\right)_{mij} &= \nabla^m \nabla^{\ell} U_{\ell m i j} \\
 &= g^{m p} g^{ \ell q} \nabla_p \nabla_q  U_{\ell m i j}  \\
 &= \frac{1}{2} g^{m p} g^{ \ell q} \left( \nabla_p \nabla_q - \nabla_q \nabla_p \right) U_{\ell m i j}  \\
 &= \frac{1}{2} g^{m p} g^{ \ell q}\left( R_{pq\ell}^{\ \ \ k}U_{kmij} + R_{pq m}^{\ \ \ k}U_{\ell k ij} + R_{pqi}^{\ \ \ k}U_{\ell m k j}
 + R_{pqj}^{\ \ \ k}U_{\ell m i k }  \right)  \\
 &= \frac{1}{2} \left( - R^{mk} U_{kmij} + R^{\ell k} U_{\ell k ij} \right) + \frac{1}{2}g^{m p} g^{ \ell q} \left( R_{pqi}^{\ \ \ k}U_{\ell m k j}
 + R_{pqj}^{\ \ \ k}U_{\ell m i k }  \right) \\
 &= \frac{1}{2}g^{m p} g^{ \ell q} \left( R_{pqi}^{\ \ \ k}U_{\ell m k j}
 + R_{pqj}^{\ \ \ k}U_{\ell m i k }  \right).
 \end{align*}
Using the decomposition of the curvature tensor as in (\ref{riemdec}), it follows that
\begin{align*}
 \nabla^m \left(\delta U\right)_{mij} &= \frac{1}{2}g^{m p} g^{\ell q} \left( W_{pqi}^{\ \ \ k} + g_{ip} P_q^k - \delta_p^k P_{iq} - g_{iq} P_{p}^k + \delta_{q}^k P_{ip} \right) U_{\ell m k j} \\
 & \ \ \ \  + \frac{1}{2}g^{m p} g^{\ell q} \left( W_{pqj}^{\ \ \ k} + g_{jp} P_q^k - \delta_p^k P_{jq} - g_{jq} P_{p}^k + \delta_{q}^k P_{jp} \right) U_{\ell m ik} \\
 &= \frac{1}{2}   W_{pqi}^{\ \ \ k} U^{qp}_{\ \  k j} + \frac{1}{2} \left( \delta_i^m P^{k \ell} - g^{km} P_{i}^{\ell} - \delta_i^{\ell} P^{km} + g^{k \ell} P_i^m \right) U_{\ell m k j} \\
 &  \ \ \ \ + \frac{1}{2}  W_{pqj}^{\ \ \ k}U^{qp}_{\ \  ik}   + \frac{1}{2} \left( \delta_j^m P^{k \ell} - g^{km} P_{j}^{\ell} - \delta_j^{\ell} P^{km} + g^{k \ell} P_j^m \right)U_{\ell m ik}.
 \end{align*}
 Using the fact that all contractions of $U$ vanish, the above simplifies to
 \begin{align*}
 \nabla^m \left(\delta U\right)_{mij} &= \frac{1}{2}   W_{pqi}^{\ \ \ k} U^{qp}_{\ \  k j} + \frac{1}{2}  W_{pqj}^{\ \ \ k}U^{qp}_{\ \  ik}   + \frac{1}{2} \left( P^{k \ell} U_{\ell i k j} - P^{km} U_{imkj} + P^{k\ell} U_{\ell j ik} - P^{km} U_{jmik} \right) \\
 &= \frac{1}{2}   W_{pqi}^{\ \ \ k} U^{qp}_{\ \  k j} + \frac{1}{2}  W_{pqj}^{\ \ \ k}U^{qp}_{\ \  ik} \\
 &= \frac{1}{2}   W_{pqi}^{\ \ \ k} U^{qp}_{\ \  k j} - \frac{1}{2}  W_{pqj}^{\ \ \ k}U^{qp}_{\ \  ki}
 \end{align*}
 (note that all terms involving the Schouten tensor can be seen to cancel after re-indexing).  Finally, since $U \in \Gamma(\mathcal{W}^{+})$, (\ref{dvf}) follows.
  \end{proof}

\end{proof}

\begin{theorem} \label{BFThm}  If $(M^4,g)$ is ASD and $U \in \ker \mathcal{D}^{*}$, then
\begin{align} \label{I}
\Delta \left( \delta U\right)_{\ell ij} = - 2 \nabla^m U_{ijk \ell} P^k_m - U_{ijk\ell} \nabla^k J + 3 J \left( \delta U\right)_{\ell ij},
\end{align}
\begin{align} \label{II}
\frac{1}{2} \Delta U_{ijk\ell} = \nabla_{k} \left( \delta U \right)_{\ell ij} - \nabla_{\ell} \left( \delta U \right)_{kij} +  U_{ijkm} P_{\ell}^{ m} - U_{ij \ell m} P^m_k + J U_{ijk \ell}.
\end{align}
\end{theorem}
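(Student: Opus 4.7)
The approach is to equate $\Delta z$ with $2Jz$ (as provided by Corollary \ref{WFCor}) and then read off one identity from each non-trivial slot of the tractor projector decomposition. Since the heavy lifting -- the projector-by-projector expansion of $\Delta z$ -- has already been carried out in Proposition \ref{RzProp}, the proof reduces to a matching of coefficients.

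First I would apply $2J$ directly to the defining expression (\ref{zdef}) for $z$ (with $n=4$) to obtain
\[
2J z_{ijAB} \;=\; J U_{ij}{}^{k\ell}\!\left(Z_{kA}Z_{\ell B} - Z_{\ell A}Z_{kB}\right) \;-\; 2J (\delta U)^k{}_{ij}\!\left(X_A Z_{kB} - Z_{kA} X_B\right).
\]
Thus $2Jz$ lives entirely in the $(XZ)$ and $(ZZ)$ slots. Comparing this against the expansion of $\Delta z$ given by Proposition \ref{RzProp}, the $(X_A Y_B - Y_A X_B)$ coefficient of $\Delta z$ is a pure contraction $W^{+}\!\cdot\! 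U$, which vanishes identically under the ASD hypothesis $W^{+}=0$; this is consistent with the absence of an $(XY)$-term on the right-hand side.

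Equating the coefficients of $(X_A Z_{\ell B} - Z_{\ell A} X_B)$ then yields
\[
-\Delta(\delta U)^{\ell}{}_{ij} - 2\nabla^m U_{ij}{}^{k\ell} P_{mk} - U_{ij}{}^{k\ell}\nabla_k J + J(\delta U)^{\ell}{}_{ij} \;=\; -2J(\delta U)^{\ell}{}_{ij},
\]
which after moving the $J$ term across and lowering the free index is exactly (\ref{I}). Equating the coefficients of $(Z_{kA}Z_{\ell B} - Z_{kB}Z_{\ell A})$ gives
\[
\tfrac{1}{2}\Delta U_{ij}{}^{k\ell} - \nabla^k(\delta U)^{\ell}{}_{ij} + \nabla^{\ell}(\delta U)^{k}{}_{ij} - U_{ij}{}^{km} P^{\ell}{}_m + U_{ij}{}^{\ell m} P^{k}{}_m \;=\; J U_{ij}{}^{k\ell},
\]
which after transposition and lowering is (\ref{II}).

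No genuine obstacle arises: the content of the theorem is essentially the coefficient matching above. The ASD hypothesis enters only to ensure the $(XY)$-slot of $\Delta z$ is automatically consistent (via $W^{+}=0$) and, through Corollary \ref{WFCor}, to identify the curvature coefficient in the twisted Weitzenb\"ock formula as $\tfrac{1}{3}R = 2J$. The only care needed is bookkeeping with raised versus lowered indices when passing between the $U_{ij}{}^{k\ell}P_{mk}$ form natural to the projector calculation and the $U_{ijk\ell}P^{k}{}_m$ form used in the statement of the theorem.
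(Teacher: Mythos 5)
Your proposal is correct and follows exactly the paper's own route: invoke Corollary \ref{WFCor} to get $\Delta z = 2Jz$, compute $2Jz$ directly from the definition \eqref{zdef} of $z$ (in dimension $4$), and read off \eqref{I} and \eqref{II} by matching the coefficients of $(X_A Z_{\ell B}-Z_{\ell A}X_B)$ and $(Z_{kA}Z_{\ell B}-Z_{kB}Z_{\ell A})$ against Proposition \ref{RzProp}, with the $(X_AY_B-Y_AX_B)$ slot vanishing automatically since $W^+=0$. This is precisely the (very brief) argument the paper gives, just spelled out.
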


\begin{proof}  If $U \in  \ker \mathcal{D}^{*}$ then $z \in \Lambda^2_{+}(\mathcal{E})$ is harmonic. Therefore, (\ref{I}) and (\ref{II}) follow from Proposition \ref{RzProp} and Corollary \ref{WFCor}.
\end{proof}

\begin{remark}  Although we will not provide a proof, it is not difficult to show that (\ref{II}) holds for any section $U \in \Gamma\left(\mathcal{W}^{+}\right)$; i.e., the condition $U \in \ker \mathcal{D}^{*}$ is not necessary.  However, this is not the case for (\ref{I}).
\end{remark}

\medskip

\section{Proof of Theorem \ref{Main}} \label{MainProof}


Let $Q_g$ denote the $Q$-curvature of $(M^4,g)$:
\begin{align} \label{Qdef} \begin{split}
Q_g &= \frac{1}{12} \left( - \Delta_g R + R_g^2 - 3 |Ric_g|^2 \right) \\
&= -\frac{1}{2} \Delta_g J + J^2 - |P|^2.
\end{split}
\end{align}
The total $Q$-curvature is a conformal invariant, and we can rewrite (\ref{sig}) as a condition on the total $Q$-curvature and the Yamabe invariant as follows.  By the Chern-Gauss-Bonnet formula,
\begin{align} \label{CGB}
\int Q_g \, dv_g = 4 \pi^2 \chi(M^4) - \frac{1}{8} \int |W_g|^2 \, dv_{g}.
\end{align}
Since $g$ is ASD, (\ref{CGB}) becomes
\begin{align} \label{CGB2}
\int Q_g \, dv_g = 4 \pi^2 \chi(M^4) - \frac{1}{8} \int |W^{-}_{g}|^2 \, dv_{g}.
\end{align}
By the Hirzebruch signature formula,
\begin{align} \label{SF}
48 \pi^2 \tau(M^4) = \int \left( |W^{+}_{g}|^2 - |W^{-}_{g}|^2 \right) \, dv_{g} = - \int |W^{-}_{g}|^2 \, dv_{g}.
\end{align}
Combining this with (\ref{CGB2}) we see that
\begin{align} \label{Qt}
\int Q_g \, dv_g = 2 \pi^2 \left( 2 \chi(M^4) + 3 \tau(M^4) \right).
\end{align}
Therefore, the assumption (\ref{sig}) is equivalent to
\begin{align} \label{Qnn}
\int Q_g \, dv_g \geq - \frac{1}{12}Y(M^4,[g])^2.
\end{align}
\smallskip

\begin{proof}[The proof of Theorem \ref{Main}]  Since our assumptions are conformally invariant, it suffices to prove the result when $g$ is replaced by any metric in the same conformal class.  The metric we will use is a solution of a variational problem related to the regularized determinant of an elliptic operator.  The precise formulation is contained in Theorem \ref{Pexist} in the Appendix, while the following consequence will suffice for our purposes:

\begin{theorem} \label{LDThm} Let $(M^4,g_0)$ be a closed Riemannian four-manifold with  \\

\noindent $(i)$  $Y(M^4, [g_0]) > 0$, \\

\noindent $(ii)$ $\int_{M^4} Q_{g_0} \, dv_{g_0} \geq -\frac{1}{12} Y(M^4, [g_0])^2$.   \\

Then there is a smooth, unit volume conformal metric $g \in [g_0]$ with $J_g = \tr P_g > 0$ satisfying
\begin{align} \label{keyPDE}
\Delta J_g \leq - |\tfP_g|^2 + \frac{15}{4}J_g^2,
\end{align}
where $\tfP_g = P - \frac{1}{4}J g$ is the trace-free Schouten tensor.
\end{theorem}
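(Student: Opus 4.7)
The strategy is to deduce Theorem \ref{LDThm} from the variational existence result stated as Theorem \ref{Pexist} in the Appendix. Theorem \ref{Pexist} produces, under hypotheses $(i)$ and $(ii)$, a smooth unit-volume conformal metric $g \in [g_0]$ that is critical for a modification of the Branson-\O rsted log-determinant functional on the conformal class $[g_0]$. The modification is engineered so that the Euler-Lagrange relation satisfied by $g$, when rewritten using the definition (\ref{Qdef}) of the $Q$-curvature, becomes precisely the pointwise inequality (\ref{keyPDE}). Granting Theorem \ref{Pexist}, the proof of Theorem \ref{LDThm} reduces to two bookkeeping steps plus the variational input.

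\textbf{Algebraic reformulation.} Using (\ref{Qdef}) together with the orthogonal decomposition $|P|^2 = |\tfP|^2 + \frac{1}{4} J^2$, a direct substitution gives the identity
\[
2 Q + 2 J^2 + |P|^2 \;=\; -\Delta J + \frac{15}{4} J^2 - |\tfP|^2.
\]
Thus (\ref{keyPDE}) is exactly equivalent to the pointwise nonnegativity $2 Q_g + 2 J_g^2 + |P_g|^2 \geq 0$. The modified functional $F$ of the Appendix is designed precisely so that this nonnegative combination of curvature quantities is controlled by the sign of its first variation, and hence holds pointwise on any minimizer.

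\textbf{Positivity of $J_g$.} Since $Y(M^4, [g_0]) > 0$ by hypothesis $(i)$, the conformal class contains a Yamabe representative with strictly positive scalar curvature. Applying the strong maximum principle to the Paneitz-type Euler-Lagrange equation for the conformal factor $w$ (with $g = e^{2w} g_0$) then forces $R_g > 0$ throughout $M^4$, so $J_g = R_g / 6 > 0$.

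The main obstacle is the variational existence itself (Theorem \ref{Pexist}), and this is where the full strength of hypothesis $(ii)$ enters. Following the strategy of Chang-Yang \cite{CYAnnals}, one would obtain the critical metric by the direct method: bounding the modified functional from below on $[g_0]$, ruling out concentration of minimizing sequences by the sharp Adams-Moser-Trudinger inequality in dimension four, and applying elliptic bootstrap for smoothness. The delicate step is the lower bound: in Chang-Yang's original setting for the log determinant of the conformal Laplacian, the coercivity regime is the upper bound $\int Q_{g_0} < 8 \pi^2$, whereas the present modification of the three Branson-\O rsted primitives rebalances the constants so that the relevant threshold is instead the lower bound $\int Q_{g_0} \, dv_{g_0} \geq -\frac{1}{12} Y(M^4,[g_0])^2$ of hypothesis $(ii)$. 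Once boundedness-below is secured in this modified regime, the remainder of the Chang-Yang compactness and regularity analysis transfers essentially verbatim to produce the desired metric.
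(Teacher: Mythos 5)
Your overall strategy (invoke the variational existence result of the Appendix and read the inequality off the Euler--Lagrange equation) matches the paper, and your algebraic identity $2Q + 2J^2 + |P|^2 = -\Delta J + \tfrac{15}{4}J^2 - |\tfP|^2$ is correct. But there is a genuine gap at the heart of the argument: the Euler--Lagrange equation for the extremal metric is not the pointwise inequality $2Q_g + 2J_g^2 + |P_g|^2 \geq 0$; it is the \emph{equation} (\ref{ELPhi2}), $-6Q_g + \tfrac{1}{2}\Delta_g R_g - Y(M^4,[g_0])R_g = \mu$ for an unknown constant $\mu$ (the functional is only extremized within the conformal class, so a Lagrange-type constant appears). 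Unwinding this gives $\Delta J = J^2 - |P|^2 + Y J + \mu/6$, and to reach (\ref{keyPDE}) one must (a) absorb the linear term $YJ$ via the arithmetic--geometric mean inequality $YJ \leq 3J^2 + \tfrac{1}{12}Y^2$, and (b) prove that $\mu \leq -\tfrac{1}{2}Y^2$. Step (b) is exactly Claim \ref{keymu} of the paper and is where hypothesis $(ii)$ actually enters: one integrates the Euler--Lagrange equation over the unit-volume manifold, uses $\int R_g\,dv_g \geq Y(M^4,[g_0])$ (the definition of the Yamabe invariant), and then the conformal invariance of $\int Q$ together with $(ii)$. Your proposal asserts that the functional is ``designed precisely so that'' the nonnegativity holds pointwise on a minimizer, but without the AM--GM step and the bound on $\mu$ the argument does not close.

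Relatedly, you misplace the role of hypothesis $(ii)$. It is not a coercivity threshold replacing the Adams-type bound: Theorem \ref{Pexist} still requires $\kappa_g < (-\gamma_2)8\pi^2$, i.e.\ $\int Q_{g_0}\,dv_{g_0} < 8\pi^2$, and this is supplied by a separate result (Theorem B of \cite{GurskyCMP}, valid whenever $Y > 0$ and $(M^4,g_0)$ is not conformal to the round sphere, which case is handled directly). Hypothesis $(ii)$ is used only after existence, to control $\mu$. Finally, your positivity argument for $J_g$ is also incomplete: the existence of a positive-scalar-curvature Yamabe representative does not by itself force the extremal metric to have $R_g > 0$. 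The paper instead uses $\mu < 0$ in (\ref{EL3}) to get $\Delta R \leq \tfrac{1}{6}R^2 + Y R$, and then applies the strong maximum principle to the quotient $R/\phi$, where $\phi > 0$ is the first eigenfunction of the conformal Laplacian; some such differential inequality coming from the Euler--Lagrange equation is indispensable here.
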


The proof of this result is somewhat involved, and will also be given in the Appendix.  In the following we will show how Theorem \ref{Main} follows from Theroem \ref{LDThm} and the Weitzenb\"ock formulas of the preceding section.  For the rest of the proof the metric $g$ is assumed to satisfy the conclusions of Theorem \ref{LDThm}, and we will usually suppress the subscript $g$.

 Assume $U \in \ker \mathcal{D}^{*}$.  We will record two integral identities that follow from Theorem \ref{BFThm} and the inequality (\ref{keyPDE}).  First, by (\ref{II}),
\begin{align} \label{BS1} \begin{split}
\frac{1}{2} \Delta |U|^2 &= \langle U , \Delta U \rangle + |\nabla U|^2 \\
&= U^{ijk\ell} \Delta U_{ijk\ell} + |\nabla U|^2 \\
&= U^{ijk\ell} \Big\{ 2 \nabla_{k} \left( \delta U \right)_{\ell ij} - 2 \nabla_{\ell} \left( \delta U \right)_{kij} + 2  U_{ijkm} P_{\ell}^{m} - 2 U_{ij \ell m} P^m_{k} + 2 J U_{ijk \ell} \Big\} +  |\nabla U|^2 \\
&= 4 U^{ijk\ell}  \nabla_{k} \left( \delta U \right)_{\ell ij}  + 4 U^{ijk\ell} \,  U_{ijkm} P_{\ell}^{ m}   + 2 J |U|^2 + |\nabla U|^2.
\end{split}
\end{align}
Recalling that
\begin{align*}
U^{ijk\ell} U_{ijkm} = \frac{1}{4} |U|^2 \delta^{\ell}_{m},
\end{align*}
(\ref{BS1}) implies
\begin{align} \label{BS2}
\frac{1}{2} \Delta |U|^2 = 4 U^{ijk\ell}  \nabla_{k} \left( \delta U \right)_{\ell ij}  + 3 J |U|^2 + |\nabla U|^2.
\end{align}
If we multiply both sides by $J$ and integrate over $M^4$, then
\begin{align} \label{BS3}
\frac{1}{2} \int_{M^4} J \Delta |U|^2 \, dv = \int_{M^4} \big\{ 4 J U^{ijk\ell}  \nabla_{k} \left( \delta U \right)_{\ell ij}  + 3 J^2 |U|^2 + J |\nabla U|^2 \big\} \, dv.
\end{align}
Integrating by parts on the left and using (\ref{keyPDE}), we find
\begin{align} \label{BS4} \begin{split}
\frac{1}{2} \int_{M^4} J \Delta |U|^2 \, dv &= \frac{1}{2} \int_{M^4} \left( \Delta J \right)|U|^2  \, dv \leq \int_{M^4} \left( - \frac{1}{2}  |\tfP|^2 + \frac{15}{8} J^2 \right) |U|^2 \, dv.
\end{split}
\end{align}
Combining (\ref{BS2}) and (\ref{BS3}),
\begin{align} \label{BS5}
0 \geq \int_{M^4} \big\{ 4 J U^{ijk\ell}  \nabla_{k} \left( \delta U \right)_{\ell ij} + \frac{1}{2}|\tfP|^2 |U|^2 + \frac{9}{8} J^2 |U|^2 + J |\nabla U|^2 \big\} \, dv.
\end{align}

We now use (\ref{I}):
\begin{align} \label{BSd1} \begin{split}
\frac{1}{2} \Delta |\delta U|^2 &= |\nabla \delta U|^2 + \langle \delta U ,\Delta \left( \delta U \right) \rangle \\
&= |\nabla \delta U|^2 + \left( \delta U \right)^{\ell ij} \Delta \left( \delta U \right)_{\ell ij} \\
&= |\nabla \delta U|^2 + \left( \delta U \right)^{\ell ij} \Big\{  - 2 \nabla^m U_{ijk \ell} P^k_{m} - U_{ijk \ell} \nabla^k J + 3 J \left( \delta U\right)_{\ell ij} \Big\} \\
&= |\nabla \delta U|^2 - 2  \left( \delta U \right)^{\ell ij} \nabla^m U_{ijk \ell} P_{m}^k -  \left( \delta U \right)^{\ell ij} U_{ijk \ell} \nabla^k J + 3 J |\delta U|^2.
\end{split}
\end{align}
Integrating this over $M^4$ gives
\begin{align} \label{BSd2}
0 = \int_{M^4} \Big\{  |\nabla \delta U|^2 - 2  \left( \delta U \right)^{\ell ij} \nabla^m U_{ijk \ell} P_m^k - \left( \delta U \right)^{\ell ij}  U_{ijk \ell}\nabla^k J + 3 J |\delta U|^2 \Big\} \, dv.
\end{align}
If we integrate by parts in the second term and use the contracted second Bianchi identity $\nabla^m P_m^k = \nabla^k J$, then
\begin{align} \label{BSd3} \begin{split}
\int_{M^4} - 2  \left( \delta U \right)^{\ell ij} \nabla^m U_{ijk \ell} P_m^k \, dv_g &= \int_{M^4} \Big\{  2  \nabla^m \left( \delta U \right)^{\ell ij} U_{ijk \ell} P_{m}^k + 2  \left( \delta U \right)^{\ell ij} U_{ijk \ell} \nabla^m P_m^k \Big\} \, dv_g \\
&= \int_{M^4} \Big\{  2  \nabla^m \left( \delta U \right)^{\ell ij} U_{ijk \ell} P_{m}^k + 2  \left( \delta U \right)^{\ell ij} U_{ijk \ell} \nabla^k J \Big\} \, dv_g.
\end{split}
\end{align}
Substituting this result back into (\ref{BSd2}) gives
\begin{align} \label{BSd4}
0 = \int_{M^4} \Big\{  |\nabla \delta U|^2   +  2  \nabla^m \left( \delta U \right)^{\ell ij} U_{ijk \ell} P_m^k + U_{ijk \ell} \left( \delta U \right)^{\ell ij} \nabla^k J + 3 J |\delta U|^2 \Big\} \, dv.
\end{align}
Next, integrate by parts in the second-to-last term in (\ref{BSd4}):
\begin{align} \label{BSd5}  \begin{split}
\int_{M^4}  U_{ijk \ell} \left( \delta U \right)^{\ell ij} \nabla^k J \, dv &= \int_{M^4} \Big\{ - U_{ijk \ell} \nabla^k \left( \delta U \right)^{\ell ij} J   - \nabla^k U_{ijk \ell}  \left( \delta U \right)^{\ell ij} J\Big\} \, dv \\
&=\int_{M^4} \Big\{ - U_{ijk \ell} \nabla^k \left( \delta U \right)^{\ell ij} J  - J |\delta U|^2 \Big\} \, dv.
\end{split}
\end{align}
Substituting this back into (\ref{BSd4}) gives
\begin{align} \label{BSd6}
0 = \int_{M^4} \Big\{  |\nabla \delta U|^2  +  2  \nabla^m \left( \delta U \right)^{\ell ij} U_{ijk \ell} P_{m}^{k} - J  U_{ijk \ell} \nabla^k \left( \delta U \right)^{\ell ij}   +  2 J |\delta U|^2  \Big\} \, dv.
\end{align}
Finally, we rewrite the curvature terms above in terms of $\mathring{P}$ and $J$:
\begin{align} \label{BSd7}
0 = \int_{M^4} \Big\{  |\nabla \delta U|^2  +  2  \nabla^m \left( \delta U \right)^{\ell ij} U_{ijk \ell} \mathring{P}_{m k} - \frac{1}{2} J  U_{ijk \ell} \nabla^k \left( \delta U \right)^{\ell ij}   +  2 J |\delta U|^2  \Big\} \, dv.
\end{align}
Multiplying by two and rearranging terms, we find
\begin{align} \label{BSd8}
  \int_{M^4} J  U_{ijk \ell} \nabla^k \left( \delta U \right)^{\ell ij} \, dv =  \int_{M^4} \Big\{ 2  |\nabla \delta U|^2  +  4  \nabla^m \left( \delta U \right)^{\ell ij} U_{ijk \ell} \mathring{P}_{m k}   +   4J |\delta U|^2  \Big\} \, dv.
\end{align}

We want to combine (\ref{BSd8}) with (\ref{BS5}).  To do so, we first use (\ref{BSd8}) to write
\begin{align} \label{BSd9} \begin{split}
\int_{M^4} & 4 J  U_{ijk \ell} \nabla^k \left( \delta U \right)^{\ell ij} \, dv   =  3 \int_{M^4}  J  U_{ijk \ell} \nabla^k \left( \delta U \right)^{\ell ij} \, dv +   \int_{M^4}  J  U_{ijk \ell} \nabla^k \left( \delta U \right)^{\ell ij} \, dv \\
&= \int_{M^4} 3 J  U_{ijk \ell} \nabla^k \left( \delta U \right)^{\ell ij} \, dv +   \int_{M^4} \Big\{ 2 |\nabla \delta U|^2  +  4  \nabla^m \left( \delta U \right)^{\ell ij} U_{ijk \ell} \mathring{P}_{m}^k + 4  J |\delta U|^2  \Big\} \, dv \\
&= \int_{M^4} \Big\{ 2 |\nabla \delta U|^2  +  4  \nabla^m \left( \delta U \right)^{\ell ij} U_{ijk \ell} \mathring{P}_{m}^k +  3 J  U_{ijk \ell} \nabla^k \left( \delta U \right)^{\ell ij} + 4 J |\delta U|^2  \Big\} \, dv
\end{split}
\end{align}
Substituting this into (\ref{BS5}), we have
\begin{align} \label{BS6} \begin{split}
0 &\geq \int_{M^4} \Big\{ 2 |\nabla \delta U|^2  +  4  \nabla^m \left( \delta U \right)^{\ell ij} U_{ijk \ell} \mathring{P}_{m}^{k} +  3 J  U_{ijk \ell} \nabla^k \left( \delta U \right)^{\ell ij} + 4 J |\delta U|^2 \\
& \ \ \ \ \ \ \ \ \  + \frac{1}{2} |\tfP|^2 |U|^2 + \frac{9}{8} J^2 |U|^2 + J |\nabla U|^2 \Big\} \, dv \\
&=  \int_{M^4} \Big\{ 2 |\nabla \delta U|^2  +  4  T_m^k  \nabla^m \left( \delta U \right)^{\ell ij} U_{ijk \ell}  + 4 J |\delta U|^2 + \frac{1}{2} |\tfP|^2 |U|^2 + \frac{9}{8} J^2 |U|^2 + J |\nabla U|^2 \Big\} \, dv,
\end{split}
\end{align}
where
\begin{align} \label{Tdef}
T_{km} =  \mathring{P}_{mk} + \frac{3}{4} J g_{km}.
\end{align}
If we define the tensor
\begin{align} \label{Vdef}
V_{m\ell ij} = T_m^k  U_{ijk\ell},
\end{align}
then the term involving $T$ in (\ref{BS6}) can be estimated (via Cauchy-Schwarz) by
\begin{align*}
\left| 4  T_m^k  \nabla^m \left( \delta U \right)^{\ell ij} U_{ijk \ell} \right| &= 4 \left| \nabla^m \left( \delta U \right)^{\ell ij}  V_{m \ell ij} \right| \\
&\leq 4\left| \nabla \delta U \right| \left| V \right|.
\end{align*}
By the arithmetic-geometric mean inequality,
\begin{align} \label{BS7}
\left| 4  T_{km}  \nabla_m \left( \delta U \right)_{\ell ij} U_{ijk \ell} \right|  \leq 2 \left| \nabla \delta U \right|^2 + 2 \left| V \right|^2.
\end{align}
By the definition of $V$,
\begin{align*}
\left| V \right|^2 &= V^{m \ell ij} V_{m \ell ij } \\
&=  T_p^m  U^{ij p \ell} T_m^k  U_{ijk\ell} \\
&= T_p^m T_m^k U^{ij\ell p} U_{ij\ell k}  \\
&= T_p^m T_m^k \left( \frac{1}{4} |U|^2 \delta_k^p  \right) \\
&= \frac{1}{4} |T|^2 |U|^2 \\
&= \frac{1}{4} \left( |\mathring{P}|^2 + \frac{9}{4} J^2 \right) |U|^2.
\end{align*}
Consequently, (\ref{BS7}) implies
 \begin{align*}
\left| 4  T_{km}  \nabla_m \left( \delta U \right)_{\ell ij} U_{ijk \ell} \right|  \leq 2 \left| \nabla \delta U \right|^2 + \frac{1}{2} |\mathring{P}|^2 |U|^2 + \frac{9}{8} J^2 |U|^2,
\end{align*}
hence
 \begin{align} \label{BS8}
4 T_{km}  \nabla_m \left( \delta U \right)_{\ell ij} U_{ijk \ell}  \geq - 2 \left| \nabla \delta U \right|^2 - \frac{1}{2} |\mathring{P}|^2 |U|^2 - \frac{9}{8} J^2 |U|^2.
\end{align}
Substituting this into (\ref{BS6}), we conclude
\begin{align} \label{BS9}
0 \geq \int_{M^4} \Bigg\{ 4 J |\delta U|^2  + J |\nabla U|^2 \Bigg\} \, dv.
\end{align}
Since $J > 0$, it follows that $\nabla U = 0$.  However, it is then immediate from (\ref{BS5}) that $U \equiv 0$.
\end{proof}

\section{Appendix: The proof of Theorem \ref{LDThm}}

In this Appendix we give the proof of Theorem \ref{LDThm}.  The material in this section is an extension of the existence work of Chang-Yang \cite{CYAnnals} for critical points of the regularized determinant of conformally covariant operators.  We begin with a brief overview of
their work, omitting the motivation from spectral geometry and limiting ourselves to the underlying variational problem.

Let $(M^4,g)$ be a closed, four-dimensional Riemannian manifold, and $W^{2,2}(M)$ the Sobolev space of functions whose weak derivatives up to order two are in $L^2$.  Consider the following functionals on $W^{2,2}(M)$:
\begin{align} \label{Idef}
I^{\pm}[w] = 4 \int w| W^{\pm}_g |^2\, dv_g - \big( \int |W^{\pm}_g|^2\, dv_g \big) \log \fint
e^{4w}\, dv_g,
\end{align}
\begin{align} \label{IIdef}
II[w] = \int w P_gw\, dv_g + 4 \int Q_g w \, dv_g - \big( \int Q_g\, dv_g\big) \log
\fint e^{4w}\, dv_g,
\end{align}
\begin{align} \label{IIIdef}
III[w] = 12 \int (\Delta w + |\nabla w|^2 )^2\, dv_g - 4 \int ( w
\Delta R_g + R_g|\nabla w|^2 )\, dv_g,
\end{align}
where $P_g$ denotes the Paneitz operator, $Q_g$ the scalar curvature, and $\fint$ denotes the normalized integral (i.e., divided by the volume of $g$).   Let $\gamma_1^{\pm},\gamma_2,\gamma_3$ be constants and define $F : W^{2,2}(M) \rightarrow \mathbb{R}$ be given by
\begin{align} \label{Fdef}
F[w] = \gamma_1^{+}I^{+}[w] + \gamma_1^{-} I^{-}[w] + \gamma_2 II[w] + \gamma_3 III[w].
\end{align}
We also define the associated conformal invariant:
\begin{align} \label{kappadef}
\kappa_g = -\gamma_1^{+} \int |W^{+}|^2\, dv_g - \gamma_1^{-} \int |W^{-}|^2 \, dv_g - \gamma_2 \int Q_g\, dv_g.
\end{align}

As explained in \cite{CYAnnals}, critical points of $F$ determine a conformal metric satisfying a fourth order curvature condition.  More precisely, if we define the {\em $U$-curvature} of $g$ by
\begin{align} \label{Udef}
U = U(g) = \gamma_1^{+} |W^{+}_g|^2 + \gamma_1^{-} |W^{-}_g|^2 +\gamma_2 Q_g - \gamma_3 \Delta_g R_g,
\end{align}
then $w$ is a smooth critical point of $F$ if and only if the conformal metric $g_F =
e^{2w}g$ satisfies
\begin{align} \label{EL2}
U(g_F) \equiv \mu
\end{align}
for some constant $\mu$.  A general existence result for critical points of $F$ was proved in \cite{CYAnnals}:

\begin{theorem} \label{CYExist} {\em (See \cite{CYAnnals}, Theorem 1.1; also \cite{GurskyAnnals}, Corollary 1.1)}
Assume:

\vskip.1in \noindent $(i)$ $\gamma_2 < 0$ and $\gamma_3 < 0$,

\vskip.1in \noindent $(ii)$ $\kappa_g < (-\gamma_2)8 \pi^2$.

\vskip.1in  Then $\sup_{w \in W^{2,2}(M)} F[w]$ is attained by some
$w \in W^{2,2}$.
\end{theorem}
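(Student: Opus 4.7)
The plan is to apply the direct method of the calculus of variations. A preliminary observation is that $F$ is invariant under the translation $w \mapsto w+c$: each of $I^\pm, II, III$ is separately translation-invariant, as one verifies by expanding and using that $P_g$ annihilates constants together with $\log\fint e^{4(w+c)} = 4c + \log\fint e^{4w}$. Consequently one may restrict attention to the mean-zero slice $\bar w := \fint w\,dv_g = 0$. The argument then has three steps: a uniform upper bound for $F$, compactness of maximizing sequences in $W^{2,2}(M)$, and upper semicontinuity along the weak limit.

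The heart of the proof is an upper bound obtained from the Adams--Beckner sharp inequality
\begin{equation*}
\log \fint e^{4(w-\bar w)}\,dv_g \leq \frac{1}{8\pi^2}\int w\,P_g w\,dv_g + C_g,
\end{equation*}
valid whenever $P_g \geq 0$ with kernel equal to the constants; in the present setting this hypothesis is provided by the positivity of the Yamabe invariant via Gursky's theorem \cite{GurskyAnnals}, which is implicit in the application to Theorem \ref{LDThm}. Collecting the $\log\fint e^{4w}$ pieces in the definition of $F$ yields the total coefficient $\kappa_g$, so
\begin{equation*}
F[w] = \gamma_2 \int w\,P_g w\,dv_g + 12\gamma_3 \int (\Delta w + |\nabla w|^2)^2\,dv_g + \kappa_g \log\fint e^{4w}\,dv_g + \mathcal{L}[w],
\end{equation*}
with $\mathcal{L}[w]$ comprising the linear-in-$w$ pieces (and the first-order quadratic $-4\gamma_3\int R|\nabla w|^2$). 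Substituting Adams' inequality into the $\log$ term reduces the effective quadratic coefficient to $\gamma_2 + \kappa_g/(8\pi^2)$, which is strictly negative exactly under hypothesis (ii); combined with $\gamma_3 < 0$ and elementary Cauchy--Schwarz/Poincar\'e estimates on $\mathcal{L}$, this yields $F[w] \leq C - \delta\int w\,P_g w\,dv_g$ for some $\delta > 0$ independent of $w$.

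The coercivity built into this bound forces any maximizing sequence $\{w_n\}$ with $\bar w_n = 0$ to be bounded in $W^{2,2}(M)$: $\int w_n\,P_g w_n$ controls $\|\Delta w_n\|_{L^2}^2$ via the ellipticity of $P_g$ and the kernel hypothesis, and Poincar\'e returns full $W^{2,2}$ control from the mean-zero normalization. Passing to a subsequence $w_n \rightharpoonup w_\infty$ weakly in $W^{2,2}$ and strongly in every $L^p$ with $p < \infty$, I would pass to the limit: the quadratic forms are weakly lower semicontinuous (hence upper semicontinuous as they appear in $F$, by the signs of $\gamma_2, \gamma_3$), the linear terms converge trivially, and for the exponential functional Adams' inequality supplies an $L^p$-bound on $e^{4w_n}$ for some $p>1$, hence uniform integrability; together with a.e.\ convergence and the Vitali convergence theorem, this gives $\log\fint e^{4w_n} \to \log\fint e^{4w_\infty}$. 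Concluding, $F[w_\infty] \geq \limsup_n F[w_n] = \sup F$.

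The principal obstacle is the upper bound: one must extract strict negativity of the leading quadratic with the sharp Adams constant, and absorb the quartic $|\nabla w|^4$ contribution inside $III$ (which has the good sign by $\gamma_3<0$ but is nonlinear and must be handled jointly with the $(\Delta w)^2$ piece coming from $P_g$) without destroying the coercivity. Once a maximizer $w_\infty\in W^{2,2}$ is obtained, standard elliptic regularity for the fourth-order Euler--Lagrange equation (\ref{EL2}) promotes it to a smooth conformal factor, producing the metric used in Theorem \ref{LDThm}.
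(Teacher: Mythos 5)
The paper does not actually prove Theorem \ref{CYExist}; it is quoted from Chang--Yang \cite{CYAnnals} (Theorem 1.1), so your proposal must be measured against their argument. Your overall strategy (translation invariance, a sharp Adams-type bound on $\log\fint e^{4w}$, coercivity, direct method) is the right one, but there is a genuine gap in the key step. The inequality you invoke, $\log\fint e^{4(w-\bar w)}\,dv_g \leq \frac{1}{8\pi^2}\int w\,P_g w\,dv_g + C$, requires $P_g\geq 0$ with kernel the constants, and that is \emph{not} among the hypotheses $(i)$--$(ii)$: you import it from the application (positive Yamabe invariant together with \cite{GurskyAnnals}), whereas the theorem is stated for an arbitrary closed four-manifold. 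The correct sharp Adams inequality, and the one Chang--Yang use, has $\frac{1}{8\pi^2}\|\Delta w\|_{L^2}^2$ on the right. One then writes $\int w\,P_g w = \|\Delta w\|_{L^2}^2 + \int\bigl(\tfrac{2}{3}R|\nabla w|^2 - 2\,\mathrm{Ric}(\nabla w,\nabla w)\bigr)$ and controls the indefinite first-order remainder by interpolation ($\|\nabla w\|_{L^2}^2\leq \epsilon\|\Delta w\|_{L^2}^2 + C_\epsilon$ on the mean-zero slice). For the same reason your coercivity bound $F[w]\leq C-\delta\int w\,P_g w$ should read $F[w]\leq C-\delta\|\Delta w\|_{L^2}^2$; as written it presumes the positivity of $P_g$ that you are not entitled to.

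Two further points need care. First, the $\gamma_3$ contribution $12\gamma_3\int(\Delta w+|\nabla w|^2)^2\leq 0$ is a complete square and cannot be split so that its $(\Delta w)^2$ part helps absorb the Adams term; this is precisely why condition $(ii)$ involves only $\gamma_2$, and your remark about handling the quartic ``jointly with the $(\Delta w)^2$ piece coming from $P_g$'' should be made precise on this point. Second, in the compactness step $III$ is quartic, not quadratic, and $W^{2,2}\hookrightarrow W^{1,4}$ is the borderline, non-compact embedding in dimension four, so $|\nabla w_n|^2$ does not converge strongly in $L^2$; the correct argument is that a.e.\ convergence of $\nabla w_n$ plus the uniform $L^2$ bound on $|\nabla w_n|^2$ gives $|\nabla w_n|^2\rightharpoonup|\nabla w_\infty|^2$ weakly in $L^2$, whence $\Delta w_n+|\nabla w_n|^2\rightharpoonup \Delta w_\infty+|\nabla w_\infty|^2$ and weak lower semicontinuity of the $L^2$ norm applies. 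With these repairs your outline reproduces the Chang--Yang proof; as it stands it proves the theorem only under the additional hypothesis $P_g\geq 0$, which does happen to hold in the paper's application to Theorem \ref{Pexist}.
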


Regularity of extremals was proved by the second author in joint work with Chang-Yang
\cite{CGYAJM}; later Uhlenbeck-Viaclovsky proved a more general regularity result for arbitrary
critical points of $F$ (see \cite{UV}).

To prove Theorem \ref{LDThm} we need to introduce another functional:
\begin{align} \label{IVdef}
IV[w] = \dfrac{\int \left( R_g + 6 |\nabla w|^2 \right)e^{2w} \, dv_g}{\left( \int e^{4w} \, dv_g \right)^{1/2}}.
\end{align}
This is just the Yamabe functional, written in a slightly non-standard form:  if $\widetilde{g} = e^{2w}g$, then
\begin{align} \label{4Yamabe}
IV[w] = \dfrac{ \int R_{\widetilde{g}} \, dv_{\widetilde{g}} }{ \mbox{Vol}(\widetilde{g})^{1/2}}.
\end{align}
 Given a constant $\gamma_4$, we define $\Phi : W^{2,2}(M) \rightarrow \mathbb{R}$ by
\begin{align} \label{Phidef}
\Phi[w] = \gamma_1^{+}I^{+}[w] + \gamma_1^{-} I^{-}[w] + \gamma_2 II[w] + \gamma_3 III[w] + \gamma_4 IV[w].
\end{align}
A trivial modification of the existence result of Chang-Yang and the regularity results of Chang-Gursky-Yang and Uhenbeck-Viaclovsky gives the following:

\begin{theorem} \label{Pexist} Assume:

\vskip.1in \noindent $(i)$ $\gamma_2 < 0, \gamma_3 < 0, \gamma_4 \leq 0$.

\vskip.1in \noindent $(ii)$ $\kappa_g < (-\gamma_2)8 \pi^2$.

\vskip.1in  Then $\sup_{w \in W^{2,2}(M)} \Phi[w]$ is attained by some $w \in W^{2,2}(M)$.  Moreover, $w \in C^{\infty}$, and the conformal metric $\widetilde{g} = e^{2w} g$ satisfies
\begin{align} \label{ELPhi}
\gamma_1^{+} |W^{+}_{\widetilde{g}}|^2 + \gamma_1^{-} |W^{-}_{\widetilde{g}}|^2 +\gamma_2 Q_{\widetilde{g}} - \gamma_3 \Delta_{\widetilde{g}} R_{\widetilde{g}} + \frac{1}{2} \gamma_4 R_{\widetilde{g}} = \mu,
\end{align}
for some constant $\mu$.
\end{theorem}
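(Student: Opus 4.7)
The plan is to follow the variational argument of Chang-Yang \cite{CYAnnals} almost verbatim, treating the new contribution $\gamma_4 IV[w]$ as a bounded, lower-order perturbation of their functional $F$. The key observation is that $IV$ is just the Yamabe functional written in exponential form, so for any $w \in W^{2,2}(M)$ we have the lower bound $IV[w] \geq Y(M^4,[g])$. Together with $\gamma_4 \leq 0$ this yields the a priori upper bound $\gamma_4 IV[w] \leq \gamma_4 Y(M^4,[g])$, which combined with the Chang-Yang upper bound on $F[w]$ gives $\sup_{W^{2,2}(M)} \Phi[w] < \infty$.

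For the existence of a maximizer I would take a maximizing sequence $\{w_k\}$, normalize by $\int e^{4w_k}\,dv_g = 1$ using the invariance of each of $I^{\pm}, II, III, IV$ under $w \mapsto w + c$, and reproduce the Chang-Yang compactness argument. Hypotheses $\gamma_2<0$, $\gamma_3<0$ together with (ii) and the Adams-type Moser-Trudinger inequality for the Paneitz operator yield a uniform $W^{2,2}$ bound on $\{w_k\}$. The only new point to check is that $\gamma_4 IV[w_k]$ is weakly upper semicontinuous, which holds because $\|\nabla w\|_{L^2}^2$ is weakly lower semicontinuous in $W^{1,2}$ while the exponential factors $e^{2w}$, $e^{4w}$ converge strongly (by Moser-Trudinger and Rellich-Kondrachov). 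Extracting a weakly convergent subsequence $w_k \rightharpoonup w_\infty$ and using weak lower semicontinuity of the quadratic terms in $II$ and $III$, I would conclude $\Phi[w_\infty] \geq \limsup \Phi[w_k]$, so that $w_\infty$ attains the supremum.

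The Euler-Lagrange equation (\ref{ELPhi}) would then be obtained by a direct variational computation against the constraint $\int e^{4w}\,dv_g = \mathrm{const}$; the variation of the Yamabe functional $IV$ contributes exactly the additional term $\tfrac{1}{2}\gamma_4 R_{\tilde g}$. For smoothness, the bootstrap of Chang-Gursky-Yang \cite{CGYAJM} and Uhlenbeck-Viaclovsky \cite{UV} extends verbatim: the leading part of the equation is the fourth-order conformally covariant operator arising from $\gamma_3 III$, whose critical exponential nonlinearity dictates the regularity theory, while the added term $\tfrac{1}{2}\gamma_4 R_{\tilde g}$ is strictly of lower differential order and is absorbed at each step of the iteration.

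The main obstacle I anticipate is ensuring that the concentration-compactness analysis of Chang-Yang (which is the technical heart of their theorem, controlling the possible loss of mass in maximizing sequences) is not disrupted by the new term. This turns out to be essentially automatic for two reasons: the conformal invariant $\kappa_g$ does not depend on $\gamma_4$, so assumption (ii) is precisely the original Chang-Yang threshold; and the Yamabe term $\gamma_4 IV$ is subcritical relative to the $Q$-curvature term governing the concentration. Thus all the delicate estimates pass through without modification, and the argument reduces to verifying the two points highlighted above.
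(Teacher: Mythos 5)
Your proposal is correct and follows essentially the same route as the paper: both exploit the scale invariance $\Phi[w+c]=\Phi[w]$ to normalize $\int e^{4w_k}\,dv_g=1$, observe that $IV$ is bounded below (by the Yamabe invariant, or by $-C$ via Cauchy--Schwarz after normalization) so that $\gamma_4 IV$ with $\gamma_4\le 0$ is a bounded-above perturbation, and then note that the Chang--Yang existence estimates and the Chang--Gursky--Yang/Uhlenbeck--Viaclovsky regularity theory go through unchanged, with the variation of $IV$ producing the extra term $\tfrac12\gamma_4 R_{\widetilde g}$ in the Euler--Lagrange equation. Your write-up in fact supplies slightly more detail (weak semicontinuity of the new term, the first-variation computation) than the paper's own proof does.
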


\begin{proof}  Note that the functional $\Phi$ is scale-invariant:
\begin{align*}
\Phi[w + c ] = \Phi[w]
\end{align*}
for any constant $c$.  Therefore, we may normalize a maximizing sequence $\{ w_k \}$ for $\Phi$ so that
\begin{align*}
\int e^{4w_k} \, dv_g = 1.
\end{align*}
Assuming $R_g \geq -C$, it follows from the Schwartz inequality that
\begin{align*}
IV[w] &= \int \left( R_g + 6 |\nabla w_k|^2 \right)e^{2w_k} \, dv_g \\
&\geq  \int R_g  e^{2w_k} \, dv_g \\
&\geq -C \left( \int e^{4w_k} \, dv_g \right)^{1/2} \\
&\geq -C.
\end{align*}
Consequently, if $\gamma_4 \leq 0$ then $\gamma_4 IV[w]$ is bounded above.  By (\ref{4Yamabe}), it is also bounded below (by the Yamabe invariant). Therefore, the addition of this term has no effect on the estimates in the existence proof of Chang-Yang.
\end{proof}

We are now ready to prove Theorem \ref{LDThm}:

\begin{proof}[The proof of Theorem \ref{LDThm}]  We first remark that if $(M^4,g_0)$ is conformally equivalent to the round sphere (suitably normalized), then $g = g_c$ satisfies the conclusions of the Theorem.  Therefore, we may assume $(M^4,g_0)$ is not conformally the round sphere.

Taking
\begin{align*}
\gamma_1^{\pm} &= 0, \\
\gamma_2 &= -6, \\
\gamma_3 &= -\frac{1}{2},  \\
\gamma_4 &= - 2 Y(M^4,[g_0]),
\end{align*}
then
\begin{align*}
\kappa_g =  6 \int Q_{g_0}\, dv_{g_0}.
\end{align*}
To use Theorem \ref{Pexist} we need to verify assumption $(ii)$; i.e.,
\begin{align} \label{Adams}
\int Q_{g_0}\, dv_{g_0} < 8 \pi^2.
\end{align}
By Theorem B of \cite{GurskyCMP}, (\ref{Adams}) holds as long as $(M^4, g_0)$ is not conformally equivalent to the round sphere.  Therefore, by  Theorem \ref{Pexist} there is a smooth conformal metric $g = e^{2w}g_0$ (which we can normalize to have unit volume) satisfying
\begin{align} \label{ELPhi2}
-6 Q_g + \frac{1}{2} \Delta_g R_g   - Y(M^4,[g_0]) R_g = \mu.
\end{align}
For the rest of the proof we will omit the subscript $g$.  If $E = Ric - \frac{1}{4}Rg$ denotes the trace-free Ricci tensor of $g$, then we may use the definition of the $Q$-curvature to rewrite (\ref{ELPhi2}) as
\begin{align} \label{EL3}
\Delta R = \frac{1}{8}R^2 - \frac{3}{2}|E|^2 + Y(M^4,[g_0]) \, R + \mu.
\end{align}
By the arithmetic-geometric mean inequality,
\begin{align*}
Y(M^4,[g_0]) \, R \leq \frac{1}{2} R^2 + \frac{1}{2} Y(M^4,[g_0])^2.
\end{align*}
Therefore,
\begin{align} \label{EL4}
\Delta R \leq \frac{5}{8}R^2 - \frac{3}{2}|E|^2 + \frac{1}{2} Y(M^4,[g_0])^2 + \mu.
\end{align}

\begin{claim} \label{keymu}
\begin{align} \label{keysize}
\mu + \frac{1}{2} Y(M^4,[g_0])^2 \leq 0.
\end{align}
\end{claim}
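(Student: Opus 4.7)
The plan is to derive the claim by integrating the Euler--Lagrange equation \eqref{ELPhi2} over $M^4$ and using the three ingredients that distinguish our setting: the conformal invariance of the total $Q$-curvature, the hypothesis $\int Q_{g_0}\,dv_{g_0} \geq -\tfrac{1}{12} Y(M^4,[g_0])^2$, and the Yamabe inequality applied to the unit-volume representative $g \in [g_0]$.

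First I would integrate \eqref{ELPhi2} against $dv_g$. Since $M^4$ is closed, $\int_{M^4} \Delta_g R_g\,dv_g = 0$ by the divergence theorem, and since $g$ has unit volume, the right-hand side integrates to $\mu$. This yields
\begin{equation*}
\mu = -6 \int_{M^4} Q_g\,dv_g - Y(M^4,[g_0]) \int_{M^4} R_g\,dv_g.
\end{equation*}
Next, conformal invariance of $\int Q\,dv$ and hypothesis (ii) of Theorem \ref{LDThm} give $\int Q_g\,dv_g = \int Q_{g_0}\,dv_{g_0} \geq -\tfrac{1}{12} Y(M^4,[g_0])^2$, so
\begin{equation*}
-6 \int_{M^4} Q_g\,dv_g \leq \tfrac{1}{2} Y(M^4,[g_0])^2.
\end{equation*}

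For the remaining term, note that $g$ is a unit-volume metric in the conformal class $[g_0]$, so the definition of the Yamabe invariant directly yields $\int_{M^4} R_g\,dv_g \geq Y(M^4,[g_0])$. Since $Y(M^4,[g_0]) > 0$ by hypothesis (i), multiplying this inequality by $-Y(M^4,[g_0]) < 0$ gives $-Y(M^4,[g_0]) \int R_g\,dv_g \leq -Y(M^4,[g_0])^2$. Combining the two estimates,
\begin{equation*}
\mu \leq \tfrac{1}{2} Y(M^4,[g_0])^2 - Y(M^4,[g_0])^2 = -\tfrac{1}{2} Y(M^4,[g_0])^2,
\end{equation*}
which is exactly \eqref{keysize}.

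No step looks hard: the only subtlety is making sure the sign conventions align (the factor $-6 = \gamma_2$ in front of $Q$, and the factor $-Y$ coming from $\tfrac{1}{2}\gamma_4$ in \eqref{ELPhi}), and that the Yamabe inequality is applied with $g$ normalized to unit volume, which was arranged in the statement of Theorem \ref{LDThm}. Positivity of $Y$ is essential: it is what converts the lower bound $\int R_g\,dv_g \geq Y$ into the needed upper bound after multiplication by $-Y$.
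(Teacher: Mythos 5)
Your argument is correct and is essentially the paper's own proof: integrate \eqref{ELPhi2} over the unit-volume metric to get $\mu = -6\int Q_g\,dv_g - Y(M^4,[g_0])\int R_g\,dv_g$, bound the first term by conformal invariance of the total $Q$-curvature together with hypothesis $(ii)$, and bound the second using $\int R_g\,dv_g \geq Y(M^4,[g_0]) > 0$. The only difference is cosmetic (the order in which the two estimates are applied).
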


For now let us assume the claim and see how the theorem follows.

From (\ref{keysize}) and (\ref{EL4}) it follows that
\begin{align} \label{EL5}
\Delta R \leq \frac{5}{8}R^2 - \frac{3}{2}|E|^2.
\end{align}
Using the fact that $J = \frac{1}{6}R$ and $\tfP = \frac{1}{2}E$, this inequality can also be written
\begin{align*}
\Delta J \leq - |\tfP|^2 + \frac{15}{4}J^2,
\end{align*}
hence (\ref{keyPDE}) holds.

To see that $R > 0$, we use (\ref{EL3}) and the fact that $\mu < 0$ to write
\begin{align} \label{ELf} \begin{split}
\Delta R &\leq \frac{1}{8}R^2 + Y(M^4,[g_0]) \, R \\
&\leq \frac{1}{6} R^2 + Y(M^4,[g_0]) \, R.
\end{split}
\end{align}
Let $\phi > 0$ denote the eigenfunction associated to the first eigenvalue $\lambda_1(L)$ of the conformal laplacian:
\begin{align} \label{L1}
L\phi := \left(- 6 \Delta + R\right)\phi = \lambda_1(L).
\end{align}
Since $Y(M^4,[g]) > 0$, it follows that $\lambda_1(L) > 0$.  An easy calculation using (\ref{ELf}) and (\ref{L1}) gives
\begin{align*}
\Delta \dfrac{R}{\phi} \leq - 2 \langle \nabla \left( \frac{R}{\phi} \right), \frac{\nabla \phi}{\phi} \rangle + \left( Y\left(M^4,[g_0]\right) + \frac{1}{6} \lambda_1(L) \right) \frac{R}{\phi}.
\end{align*}
It follows from the strong maximum principle that $R/\phi > 0$ on $M$, hence $R > 0$.   This completes the proof of the theorem, once we prove Claim \ref{keymu}.

\medskip

\begin{proof}[Proof of Claim \ref{keymu}] If integrate (\ref{ELPhi2}) over $M$ and use the fact that $g$ has unit volume, we obtain
\begin{align} \label{tr1}
\mu = -6 \int Q_g \, dv_g  - Y(M^4,[g_0]) \int R_g \, dv_g.
\end{align}
By definition of the Yamabe invariant (again using the fact that $g$ has unit volume) and the fact that $Y(M^4,[g_0]) > 0$,
\begin{align*}
Y(M^4,[g_0]) \int R_g \, dv_g \geq Y(M^4,[g_0])^2.
\end{align*}
Therefore, by (\ref{tr1}),
\begin{align} \label{tr2}
\mu \leq -6 \int Q_g \, dv_g - Y(M^4,[g_0])^2.
\end{align}
Since the total $Q$-curvature is a conformal invariant, using assumption $(ii)$ of the theorem we see that
\begin{align*}
\mu &\leq -6 \int Q_g \, dv_g - Y(M^4,[g_0])^2 \\
&= -6 \int Q_{g_0} \, dv_{g_0} - Y(M^4,[g_0])^2 \\
&\leq \frac{1}{2}Y(M^4,[g_0])^2 - Y(M^4,[g_0])^2 \\
&= -\frac{1}{2} Y(M^4,[g_0])^2,
\end{align*}
which proves (\ref{keysize}).
\end{proof}

\end{proof}

%
%


\begin{thebibliography}{s}
%
%
%
%

\bibitem{AHS78} M. F. Atiyah, N. J. Hitchin, and I. M. Singer, Self-duality in four-dimensional Riemannian
geometry, {\em Proc. Roy. Soc. London Ser. A} {\bf 362} (1978), no. 1711, 425--461.

\bibitem{BEG} T.N.\ Bailey, M.G.\ Eastwood, and A.R.\ Gover, Thomas's structure bundle for conformal, projective and related
  structures, {\em Rocky Mountain J.\ Math.}, {\bf 24} (1994), 1191--1217.

\bibitem{BO} T. P. Branson and B. {\O}rsted, Bent, Explicit functional determinants in four dimensions,
{\em Proc. Amer. Math. Soc.}, {\bf 113} (1991), no. 3, 669–-682.

\bibitem{BL} J.-P. Bourguignon and H. B. Lawson, Jr. Stability and isolation phenomena for Yang-Mills fields. {\em  Comm. Math. Phys.} {\bf 79}  (1981), no. 2, 189–-230.


\bibitem{CD} D.M.J. Calderbank, T. Diemer, Differential
  invariants and curved Bernstein-Gelfand-Gelfand sequences,
  {\em J.\ Reine Angew.\ Math.}, {\bf 537} (2001), 67--103.

\bibitem{Cap-def} A.\ \v Cap, Infinitesimal automorphisms and
  deformations of parabolic geometries, {\em
    J. Eur. Math. Soc}. (JEMS) {\bf 10} (2008), no. 2, 415--437.

\bibitem{CGtams} A.\ \v Cap and A.R.\ Gover, Tractor calculi
		for parabolic geometries, {\em Trans.\ Amer.\ Math.\ Soc.}, {\bf 354}
		(2002), 1511--1548.

\bibitem{CSSannals} A.\ \v Cap, J.\ Slova\'ak, V.\ Sou\v{c}ek,
  Bernstein-Gelfand-Gelfand sequences, Ann.\ of Math., (2) {\bf 154}
  (2001), no. 1, 97--113.



\bibitem{CGYAJM} S.-Y.A, Chang, M. J. Gursky, and P. Yang,  Regularity of a fourth order nonlinear PDE with critical exponent. {\em Amer. J. Math.} {\bf 121} (1999), no. 2, 215-–257.

\bibitem{CYAnnals} S.-Y.A, Chang and P. Yang, Extremal metrics of zeta function determinants on 4-manifolds. {\em Ann. of Math.} (2) {\bf 142} (1995), no. 1, 171–=212.

\bibitem{curry-G} S. Curry and A. R. Gover, An introduction to
  conformal geometry and tractor calculus, with a view to applications
  in general relativity. Asymptotic analysis in general relativity,
  86--170, London Math. Soc. Lecture Note Ser., 443, Cambridge
  Univ. Press, Cambridge, 2018.

\bibitem{Derdzinski} A. Derdzinski, Self-dual Kähler manifolds and Einstein manifolds of dimension four, {\em Compositio Math.} {\bf 49} (1983), no. 3, 405–-433.

\bibitem{DF} S. Donaldson and R. Friedman, Connected sums of self-dual manifolds and deformations of singular spaces.
{\em Nonlinearity} {\bf 2} (1989), no. 2, 197-–239.

\bibitem{ES} M. G. Eastwood and M. A. Singer, On the geometry of twistor spaces. Unpublished manuscript, 1991.


\bibitem{ES-semi} M. G. Eastwood and J.\ Slov\'ak, Semiholonomic
  Verma modules, {\em J.\ Algebra}, {\bf 197} (1997), no. 2, 424--448.

\bibitem{Floer} A. Floer, Self-dual conformal structures on $\ell \mathbb{CP}^2$. {\em J. Differential Geom.} {\bf 33} (1991), no. 2, 551-–573.


\bibitem{G-almost}  A. R. Gover, Almost Einstein and
		Poincare-Einstein manifolds in Riemannian signature, {\em J.\
		Geometry and Physics}, {\bf 60} (2010), 182--204.

\bibitem{GP} A. R. Gover and L. Peterson, Conformally Invariant Powers
  of the Laplacian, Q-Curvature, and Tractor Calculus, {\em
    Comm. Math. Physics} {\bf 235} (2003), 339–-378.

\bibitem{GP-def} A. R. Gover and L. Peterson,
 The ambient obstruction tensor and the conformal deformation complex
Gover,
{\em Pacific J. Math.} {\bf 226} (2006), no. 2, 309--351.


\bibitem{GurskyAnnals} M. J. Gursky, The Weyl functional, de Rham cohomology, and K\"ahler-Einstein metrics.  {\em Ann. of Math.} (2) {\bf 148} (1998), no. 1, 315-–337.

\bibitem{GurskyCMP} M. J. Gursky, The principal eigenvalue of a conformally invariant differential operator, with an application to semilinear elliptic PDE. {\em Comm. Math. Phys.} {\bf 207} (1999), no. 1, 131-–143.

\bibitem{GKS} M. J. Gursky, C. L. Kelleher, and J. A. Streets, A conformally invariant gap theorem in Yang-Mills theory. {\em Comm. Math. Phys. } {\bf  361} (2018), no. 3, 1155-–1167.

\bibitem{Itoh93} M. Itoh, Moduli of half conformally flat structures, {\em Math. Ann.} {\bf 296} (1993),
no. 4, 687--708.

\bibitem{Itoh} The Weitzenb\"ock formula for the Bach operator, {\em Nagoya Math. J.} {\bf 137} (1995),
149--181.




\bibitem{Joy95}  D. D. Joyce, Explicit construction of self-dual 4-manifolds, {\em Duke Math. J.} {\bf 77}
(1995), no. 3, 519--552.



\bibitem{KK} A. D. King and K. Kotschick, The deformation theory of anti-self-dual conformal structures.
{\em Math. Ann.} {\bf 294} (1992), no. 4, 591–-609.

\bibitem{Kob} O. Kobayashi, On a conformally invariant functional of the space of Riemannian
metrics, {\em J. Math. Soc. Japan} {\bf 37} (1985), no. 3, 373--389.

\bibitem{LeBrunPAMS}  C. LeBrun, On the topology of self-dual 4-manifolds, {\em Proc. Amer. Math. Soc.} {\bf 98} (1986), no. 4, 637–-640.

\bibitem{Leb91} C. LeBrun, Explicit self-dual metrics on $\mathbb{CP}^2 \hash \cdots \hash \mathbb{CP}^2$, {\em J. Diff. Geom.} {\bf 34} (1991),
no. 1, 223--253.

\bibitem{Leb92} C. LeBrun, Twistors, K\"ahler manifolds, and bimeromorphic geometry, I, {\em  J. Amer. Math.
Soc.} {\bf 5} (1992), no. 2, 289--316.


\bibitem{Poon86} Y. Sun Poon, Compact self-dual manifolds with positive scalar curvature, {\em J. Diff.
Geom.} {\bf 24} (1986), no. 1, 97--132.

\bibitem{Poon92} Y. Sun Poon, On the algebraic structure of twistor spaces, {\em J. Diff. Geom.} {\bf 36} (1992),
no. 2, 451--491.

\bibitem{ST} I. M. Singer and J. A. Thorpe, The curvature of 4-dimensional Einstein spaces. In {\em Global
Analysis (Papers in Honor of K. Kodaira)}, pages 355 -- 365. Univ. Tokyo Press, Tokyo, 1969.

\bibitem{Taubes} C. H. Taubes, The existence of anti-self-dual conformal structures, {\em J. Diff. Geom.} {\bf 36} (1992), no. 1, 163--253.

\bibitem{UV} K. K. Uhlenbeck and J. A. Viaclovsky,
Regularity of weak solutions to critical exponent variational equations. {\em Math. Res. Lett.} {\bf 7} (2000), no. 5-6, 651-–656.

\bibitem{Via} J. Viaclovsky, Critical metrics for Riemannian curvature functionals. In {\em Geometric analysis}, 197–274,
IAS/Park City Math. Ser., 22, Amer. Math. Soc., Providence, RI, 2016.

\end{thebibliography}
\end{document}